\newif\ifsiam
\ifsiam
\documentclass{siamart171218}
\newtheorem{remark}{Remark}[section]
\newtheorem{example}{Example}[section]
\else
\documentclass[12pt]{article}
\usepackage[left=20mm,right=20mm,top=25mm,bottom=25mm]{geometry}

\usepackage{amsthm}

\newtheorem{definition}{Definition}[section]
\newtheorem{remark}{Remark}[section]

\newtheorem{theorem}{Theorem}[section]
\newtheorem{lemma}[theorem]{Lemma}
\newtheorem{proposition}[theorem]{Proposition}

\fi
\usepackage[hidelinks]{hyperref}
\usepackage{amsfonts,amsmath}
\usepackage{upgreek}
\usepackage{graphicx}
\usepackage{bm}
\usepackage{euscript}
\usepackage{pgfplots}
\usepackage{multirow}


\newcommand{\SkM}{S_k^M}
\newcommand{\uhkM}{u_{hk}^M}
\newcommand{\VhkM}{V_{hk}^M}
\newcommand{\bkappa}{\bm{\kappa}}

\renewcommand{\tilde}{\widetilde}

\newcommand{\norm}[2]{\ensuremath{\left\| #1 \right\|_{#2}}}
\newcommand{\abs}[1]{\ensuremath{\left| #1 \right|}}
\newcommand{\seq}[1]{\ensuremath{\left\{ #1 \right\}}}
\newcommand{\sep}[1]{\ensuremath{\left( #1 \right)}}
\newcommand{\eus}[1]{\ensuremath{\EuScript{#1}}}
\newcommand{\Refx}[1]{\mbox{\rm(\ref{#1})}}
\def\NN{\mathbb{N}}\def\RR{\mathbb{R}}\def\PP{\mathbb{P}}\def\I{\mathbb{I}}
\def \A{\eus{A}}\def \B{\eus{B}}\def \F{\eus{F}}\def \R{\eus{R}}
\def \u{{\bf u}}\def \f{{\bf f}}\def \b{{\bf b}}\def \bv{{\bf v}}\def \w{{\bf w}}\def \z{{\bf z}}\def \d{{\bf d}}
\def\x{{\bm{x}}}\def \y{{\bm{y}}}
\def\Nx{N_\x}\def\Ny{N_\y}
\def \zr{{\bf 0}}

\def \balpha{{\bm{\alpha}}}
\def \bGamma{{\bm{\Gamma}}}
\def\dd{{\rm d}} 
\def\card{{\rm card\,}}
\def\fl{{\eus{F}\!\ell}}\def \nnz{\textup{nnz}}
\def\ttau{\tilde\tau}

\DeclareMathOperator{\diag}{diag}
\DeclareMathOperator{\spn}{span}
\DeclareMathOperator{\spp}{supp}

\def\abrev#1{\textcolor{black}{#1}}
\def\rev#1{\textcolor{black}{#1}}


\colorlet{RED}{red}


\begin{document}

\title{Truncation preconditioners for stochastic Galerkin finite element discretizations}
\author{Alex Bespalov\footnotemark[1] \and Daniel Loghin\footnotemark[1] \and Rawin Youngnoi\footnotemark[1]}

\long\def\symbolfootnote[#1]#2{\begingroup
\def\thefootnote{\fnsymbol{footnote}}\footnote[#1]{#2}\endgroup}

\renewcommand{\thefootnote}{\fnsymbol{footnote}}
\footnotetext[1]{\,School of Mathematics, University of Birmingham,
Edgbaston, Birmingham B15 2TT, UK ({\tt a.bespalov@bham.ac.uk}, {\tt d.loghin@bham.ac.uk}, {\tt rxy618@student.bham.ac.uk}).\\
{\bf Acknowledgments.} The work of the first author was supported by the EPSRC under grant EP/P013791/1
and by The Alan Turing Institute under the EPSRC grant EP/N510129/1.
}

\renewcommand{\thefootnote}{\arabic{footnote}}

\ifsiam
\maketitle
\else
\maketitle 
\fi

\begin{abstract}
Stochastic Galerkin finite element method (SGFEM) provides an efficient alternative to traditional sampling methods
for the numerical solution of linear elliptic partial differential equations with parametric or random inputs.
However, computing stochastic Galerkin approximations for a given problem requires
the solution of large coupled systems of linear equations.
Therefore, an effective and bespoke iterative solver is a key ingredient of any SGFEM implementation.
In this paper, \rev{we analyze a class of \emph{truncation preconditioners}} for SGFEM.
Extending the idea of the mean-based preconditioner, \rev{these} preconditioners capture
additional significant components of the stochastic Galerkin matrix.
Focusing on the parametric diffusion equation as a model problem
and assuming affine-parametric representation of the diffusion coefficient,
we perform spectral analysis of the preconditioned matrices and
establish optimality of truncation preconditioners with respect to SGFEM discretization parameters.
Furthermore, we report the results of numerical experiments for model diffusion problems
with affine and non-affine parametric representations of the coefficient.
In particular, we look at the efficiency of the solver
(in terms of iteration counts for solving the underlying linear systems) and
compare \rev{truncation} preconditioners with other existing preconditioners for stochastic Galerkin matrices,
such as the mean-based 
and the Kronecker product~ones. 
\end{abstract}

\medskip

\noindent
{\bf Key words.}
stochastic Galerkin methods, 
parametric PDEs,
iterative solvers, Krylov methods, preconditioning, truncation preconditioners, Gauss--Seidel approximation

\medskip

\noindent
{\bf AMS subject classifications.}
35R60,  
65C20,  
65F10,  
65F08,  
65N22,  
65N30  

\section{Introduction}
\label{sec:intro}

\abrev{Over the last two decades,
many new challenging problems in the field of computational partial differential equations (PDEs) have been motivated by
the rapidly developing area of uncertainty quantification.
Efficient numerical solution of PDE problems with parametric or uncertain inputs
is one of these challenges.
Several numerical methods have been developed and analyzed in this context.
In particular, the stochastic Galerkin finite element method
(SGFEM)~\cite{ghanemspanos91, MR2084236, MR3202242} has emerged as
an efficient and rapidly convergent alternative to traditional Monte Carlo sampling.
However, the implementation of the SGFEM requires the solution of huge (although highly-structured) linear systems.
For realistic applications, such linear systems can only be solved using iterative methods
equipped with effective, bespoke preconditioners.}
\abrev{To that end, a range of linear algebra techniques have been employed including the}
multigrid and multilevel methods~\cite{ghanem03}, \cite{elmanfurnival07}, \cite{vandewalle07},
\cite{brezina14},~\cite{lee16},~\cite{pultarova17},~\cite{elmansu18},
domain decomposition methods~\cite{ghanem09}, \cite{loisel14}, \cite{waad14},
hierarchical methods~\cite{PellissettiG_00_ISS}, \cite{ghanem14}, \cite{ghanem14a},
as well as Krylov methods~\cite{ghanemkruger96}, \cite{PellissettiG_00_ISS},
\cite{elmanpowell2009}, \cite{jin09}, \cite{vandewalle10}, \cite{Ullmann10}.

In this work, we focus on Krylov methods; in particular, for
\abrev{a parametric elliptic PDE problem} 
with solution approximated by the SGFEM, we
\abrev{employ} iterative methods of Krylov subspace type for which
we design and analyze a suitable class of preconditioners. 

As a model problem we consider the parametric steady-state diffusion equation
\abrev{subject to homogeneous Dirichlet boundary conditions}: 
\begin{equation}
\begin{aligned}-\nabla\cdot\left(a(\x,\y)\nabla u(\x,\y)\right) & =f(\x), &  & \x\in \Omega,\ \y\in\bGamma,\\
u(\x,\y) & =0, &  & \x\in\partial \Omega,\ \y\in\bGamma,
\end{aligned}
\label{eq:strong:form}
\end{equation}
where
$\Omega\subset\mathbb{R}^{d}$ ($d=1,2,3$) is a bounded (spatial)
domain with Lipschitz polygonal boundary $\partial\Omega$, 
$f\in H^{-1}(\Omega)$, and
$\bGamma := \prod_{m=1}^{\abrev{\infty}}\Gamma_m$ is the parameter domain with bounded
intervals $\Gamma_m\subset\RR$, $m\in\NN$.
%
%
We also note that $\nabla$ denotes the spatial gradient operator $\nabla_{\x}$.

The SGFEM applied to problem~\Refx{eq:strong:form} generates approximations in tensor product spaces $X \otimes S$,
where $X$ is a finite element space associated with the physical domain~$\Omega$, and
$S$ is a space of multivariate polynomials over a finite-dimensional manifold
\abrev{$\bGamma_M \subset \bGamma$
(here, $M \in \NN$ refers to the number active parameters in the SGFEM approximation)}.
A typical SGFEM discretization of problem \Refx{eq:strong:form}
yields a structured linear system $A\u=\f$ \abrev{with the coefficient matrix}
\[
  A=
  \begin{bmatrix}
    A_{11} & A_{12} &\cdots & A_{1\Ny}\\
    A_{21} & A_{22} &\cdots & A_{2\Ny}\\
    \vdots& \vdots & \ddots & \vdots\\
    A_{\Ny1}& A_{\Ny2}& \cdots & A_{\Ny \Ny}
  \end{bmatrix},
\]
\abrev{where the}
blocks $A_{ij}$ are $\Nx \times \Nx$ matrices
\abrev{with $\Nx = \dim(X)$ and $\Ny = \dim(S)$}.
\abrev{If the parametric coefficient $a(\x,\y)$ in~\Refx{eq:strong:form} is represented via
a (truncated or infinite) series expansion that is \emph{affine} in parameters, e.g.,
\[
   a(\x,\y) = a_0(\x)+\sum_{m=1}^{\infty} a_m(\x) y_m,\quad
   \x \in \Omega,\ \y \in \bGamma,
\]
%
then it is well known (see, e.g., \cite[Chapter~9]{lord14}) 
that the system matrix $A$ can be written as}
%
%
a sum of Kronecker products:
\begin{equation}
   A=G_0\otimes K_0+\sum_{m=1}^{\abrev{M}} G_m\otimes K_m.
   \label{eq:kron:sum}
\end{equation}
\abrev{Here,
$G_m \in \RR^{\Ny \times \Ny}$ are the (parametric) matrices built from polynomial basis functions in $S$,}
and $K_m \in \RR^{\Nx \times \Nx}$ are the (spatial) stiffness matrices 
\abrev{associated with coefficients $a_m(\x)$ in the series expansion of~$a(\x,\y)$}.
%

\abrev{The numerical solution of stochastic Galerkin linear systems
presents significant challenges.
On the one hand,}
it is evident from the structure of $A$ indicated above that such matrices can reach
huge sizes very quickly.
\abrev{For example, if $S$ is the space of complete polynomials of degree $\le k$ in $M$ parameters,
then $\Ny={M+k \choose k}$}
grows very fast with $M$ and $k$.
On the other hand, in the case of affine-parametric expansion of the coefficient~$a(\x,\y)$ as given above,
the matrix $A$ is block-sparse due to the sparsity of matrices $G_m$ in~\Refx{eq:kron:sum}.
This feature, however, is not guaranteed for other parametric representations of~$a(\x,\y)$
(see~\S\ref{sec:numerics:non-affine} for one example of such a representation).
Thus, the availability of effective preconditioning techniques is of paramount importance, in order to
enable the application of the SGFEM to a range of parametric PDE problems. 

In an early effort to provide an efficient solver technique for stochastic Galerkin linear systems, 
the \emph{mean-based preconditioner} \abrev{was proposed by Ghanem and Kruger in~\cite{ghanemkruger96}
and subsequently analyzed by Powell and Elman in~\cite{elmanpowell2009}}.
In the notation employed above, this preconditioner is defined as
\begin{equation}
  \label{eq:mb}
  P_0 := G_{0}\otimes K_{0}.
\end{equation}
It has been shown that 
under certain standard boundedness conditions on the diffusion coefficient $a(\x,\y)$,
the performance of \abrev{the conjugate gradient (CG) method} equipped
with the preconditioner $P_0$ is independent of the \abrev{dimensions $\Nx$ and $\Ny$ of the
underlying spatial and parametric approximation spaces}.
%
This is essentially due to the mean component $a_0(\x)$
strongly dominating other terms in the expansion of~$a(\x,\y)$. 
When this is not the case, the performance deteriorates and dependence on \abrev{$\Ny$} may arise
\abrev{(e.g., via dependence on the number $M$ of active parameters and/or the polynomial degree $k$
of parametric approximations)}.

An alternative \abrev{approach} that takes into account contributions from
\abrev{all component matrices in~\Refx{eq:kron:sum}}
was suggested by Ullmann in~\cite{Ullmann10}.
This preconditioner, which we denote by $P_\otimes$, is also defined as a Kronecker product:
\begin{equation}
  \label{eq:kron}
  P_{\otimes} := G\otimes K_{0},
\end{equation}
where the matrix \abrev{$G \in \RR^{\Ny \times \Ny}$} is constructed in order to minimize the Frobenius
norm of the difference between the system matrix and the preconditioner, i.e.,
$$G:=\arg\min_{Q}\norm{A - Q\otimes K_{0}}{F}.$$
While the eigenvalue bounds for the preconditioned system derived in~\cite{Ullmann10}
are not sharp
\abrev{and one cannot generally expect the iteration counts of the $P_{\otimes}$-preconditioned CG
to be independent of the dimension $\Ny$ of the parametric approximation space~$S$, the
\emph{Kronecker product preconditioner}} $P_\otimes$ outperforms the mean-based preconditioner $P_0$,
\abrev{particularly in the case of the approximation space $S$ comprising polynomials
of large degree $k$.}

\rev{
A preconditioning strategy that exploits the hierarchical structure of stochastic Galerkin matrices
was proposed by Soused{\' i}k and Ghanem in~\cite{ghanem14}.
In this strategy, the inverses of submatrices 
are \emph{approximated} by inverses of their diagonal blocks
in the action of a hierarchical symmetric block Gauss--Seidel preconditioner.
This preconditioner is further enhanced in~\cite{ghanem14} by performing the matrix-vector multiplications in its action
using only a subset of component matrices in~\Refx{eq:kron:sum} selected according to the size of the norm of stiffness matrices $K_m$.
In particular, a monotonic decay of the norms of $K_m$ effectively results in \emph{truncating} the sum in~\Refx{eq:kron:sum}.
Extensive numerical experiments
for a model problem with truncated lognormal diffusion coefficient have demonstrated the effectiveness and competitiveness
of this combined preconditioning strategy (called the \emph{truncated hierarchical preconditioning})
in terms of both convergence of iterations and computational cost.
The results of these experiments have also shown that
truncated versions of the non-hierarchical symmetric block Gauss--Seidel preconditioner and
the approximate hierarchical Gauss--Seidel preconditioner
are largely comparable in terms of convergence of~iterations.}

In this paper, we \rev{study} a preconditioning technique based on
truncating the \abrev{sum of Kronecker products in~\Refx{eq:kron:sum} as follows:}
\[
  P_r := G_0\otimes K_0 + \sum_{m=1}^r G_m\otimes K_m.
\]
We will refer to this class of solvers as \emph{truncation preconditioners.}
\abrev{While it includes the mean-based preconditioner as a special case,
by capturing additional significant components of the stochastic Galerkin matrix $A$ \rev{one aims} 
to improve the preconditioner's performance
retaining its optimality with respect to the discretization parameters (i.e., $\Nx$, $M$ and $k$).}
%
\rev{
Truncation preconditioners of this type were considered in~\cite[Section~4.2]{KubinovaP_20_BPS}
and analyzed therein for two extreme cases, namely $r = 0$ and~$r = M-1$.
While the preconditioning matrix $P_r$ has a block-diagonal structure in the case of the \emph{tensor-product} polynomial space~$S$
(with appropriately ordered basis functions; see~\cite{KubinovaP_20_BPS}),
this property, in general, does not hold for the matrix $P_r$ if $S$ is the space of \emph{complete} polynomials.
Therefore, in the latter case, considered in the present paper, a practical implementation of the truncation preconditioner $P_r$
requires an additional technique to ensure efficient application of the action of $P_r^{-1}$ on a given vector.
To this end, similarly to the strategy in~\cite{ghanem14},
we propose to use a symmetric block Gauss--Seidel approximation $\tilde P_r$ of the truncation preconditioner $P_r$.
}

\rev{
Focusing on the case of affine-parametric representation of~$a(\x,\y)$, 
our main goal in this paper is to perform spectral analysis of the preconditioned matrices and
establish optimality of the preconditioners $P_r$ and $\tilde P_r$
with respect to all discretization parameters.
By doing this we fill a gap in the theoretical analysis of preconditioning techniques
for the numerical solution of stochastic Galerkin linear systems.
}

The paper is structured as follows. In the next section we present a
detailed problem formulation, including
\abrev{specific assumptions on the parametric diffusion coefficient $a(\x,\y)$},
the variational formulation of~\Refx{eq:strong:form}, as well as
the definitions and properties of the matrices $A_{ij}$, $G_m$, $K_m$.
In section~\ref{sec:preconditioners}, we introduce \rev{and analyze} a class of
preconditioners $P_r$ based on truncation of the series representation
of the parametric diffusion coefficient~$a(\x,\y)$.
A symmetric block Gauss--Seidel
approximation to $P_r$ is introduced and analyzed in section~\ref{sec:modified:precond}.
Section~\ref{sec:numerics} includes a range of numerical results, with
conclusions and potential extensions summarized in section~\ref{sec:conclusions}.

\section{Problem formulation}  \label{sec:problem}

In this section we outline some standard
background results and assumptions and introduce the variational formulation
required for the numerical solution of \Refx{eq:strong:form} via the SGFEM.


\subsection{Functional analytic framework} \label{sec:func:frame}


\abrev{Let $y_m\in\Gamma_m$ be} the images of independent bounded random variables with cumulative
density function $\pi_{m}(y_{m})$ and probability density function
$p_{m}(y_{m})=\dd\pi_{m}(y_{m})/\dd y_{m}$. The
joint cumulative density function and the joint probability density
function of the associated multivariable random variable $\y\in\bGamma$ are defined,
respectively, as
\[\pi(\bm{y}):=\prod_{m=1}^{\infty}\pi_{m}(y_{m})\quad \text{and}\quad
  p(\bm{y}):=\prod_{m=1}^{\infty}p_{m}(y_{m}).
\]
Without loss of generality, we assume for all $m\in\NN$ that $\Gamma_{m}:=[-1,1]$\abrev{; additionally, we assume} that $p_{m}$ is even. Note that each $\pi_{m}$ is a probability measure
on $(\Gamma_{m},\mathcal{B}(\Gamma_{m}))$, where $\mathcal{B}(\Gamma_{m})$
is the Borel $\sigma$-algebra on $\Gamma_{m}$. Accordingly, $\pi$
is a probability measure on $(\bGamma,\mathcal{B}(\bGamma))$, where
$\mathcal{B}(\bGamma)$ is the Borel $\sigma$-algebra on $\bGamma$.
Then $L_{\pi_{m}}^{\abrev{2}}(\Gamma_{m})$, $L_{\pi}^{\abrev{2}}(\bGamma)$
represent the weighted Lebesgue spaces with associated inner products
\begin{align*}
  \langle f,g\rangle_{\pi_{m}} & := \int_{\Gamma_{m}}p_{m}(y_{m})f(y_{m})g(y_{m})\dd y_{m}, \qquad
  f,g\in L_{\pi_{m}}^{2}(\Gamma_{m}),
  \\[3pt]
  \langle f,g\rangle_{\pi} & := \int_{\bGamma}p(\bm{y})f(\bm{y})g(\bm{y})\dd\bm{y}, \qquad
  f,g\in L_{\pi}^{2}(\bGamma).
\end{align*}
Finally, a space relevant to the
weak formulation of problem (\ref{eq:strong:form}) is
$L_{\pi}^{2}(\bGamma;H_{0}^{1}(\Omega)),$
which is the space of strongly measurable functions
$v:\Omega\times\bGamma\to\mathbb{R}$ such that 
\[
   \norm{v}{L_{\pi}^{2}(\bGamma;H^1_0(\Omega))} :=
   \norm{\norm{v(\cdot,\bm{y})}{H^1_0(\Omega)}}{L_{\pi}^{2}(\bGamma)} :=
   \bigg[
   \int_\bGamma
   p(\y) \norm{v(\cdot,\bm{y})}{H^1_0(\Omega)}^2\dd\y
   \bigg]^{1/2}
   < +\infty,
\]
where $H_{0}^{1}(\Omega)$ is the usual Sobolev
space of functions in $H^{1}(\Omega)$ that vanish at the boundary $\partial\Omega$
in the sense of traces. It is known that $L_{\pi}^{2}(\bGamma;H_{0}^{1}(\Omega))$
is isometrically isomorphic to the following tensor product Hilbert space (see \cite[Remark C.24]{schwabgittelson2011}):
\begin{equation}
V:=L_{\pi}^{2}(\bGamma)\otimes H_{0}^{1}(\Omega).\label{eq:tensorspace}
\end{equation}
We will use the space $V$ to \abrev{derive} the variational formulation of problem~(\ref{eq:strong:form})
in~\S\ref{sec:weak:and:Galerkin} below.
Before doing this, let us make some specific assumptions on the parametric diffusion coefficient~$a(\x,\y)$.


\subsection{The parametric diffusion coefficient} \label{sec:diff:coeff}

We will assume that the diffusion coefficient $a$ is \emph{affine-parametric}, i.e.,
\begin{equation}
a(\x,\y)=a_{0}(\x)+\sum_{m=1}^{\infty}a_m(\x)y_{m},\quad\x\in \Omega,\ \y\in\bGamma,\label{eq:affine}
\end{equation}
with $a_{m}\in L^{\infty}(\Omega)$, $m \in \NN_0$, and with the series
converging uniformly in $L^{\infty}(\Omega)$.
The representation~\Refx{eq:affine} is motivated by the Karhunen--Lo{\`e}ve
expansion of a second-order random field $a$ with given mean $\mathbb{E}[a]$
and covariance function $\hbox{\rm Cov}[a](\x,\x')$. 
In this case, $a(\x,\y)$ is represented as in~(\ref{eq:affine})
with $a_{0}(\x)=\mathbb{E}[a]$ and $a_{m}(\bm{x})=\sqrt{\lambda_{m}}\varphi_{m}(\bm{x})$
($m=1,2\ldots$), where $\{(\lambda_{m},\varphi_{m})\}_{m=1}^{\infty}$
are the eigenpairs of the integral operator $\int_{\Omega}\hbox{\rm Cov}[a](\x,\x')\varphi(\x')\dd\x'$
such that $\lambda_{1}\geq\lambda_{2}\geq\ldots>0$, and $y_{m}$
($m=1,2\ldots$) are the images of pairwise uncorrelated random variables
with zero mean and unit variance (see, e.g.,~\cite[\S2.3]{ghanemspanos91}).

As is the case for deterministic diffusion problems, the standard
conditions for well-posedness of the weak formulation of problem~\Refx{eq:strong:form} are the positivity and boundedness of the diffusion
coefficient~$a$.
%
In order to ensure that the coefficient~$a(\x,\y)$ given by~(\ref{eq:affine})
satisfies these conditions, 
we assume that (cf.~\cite[Proposition~2.22]{schwabgittelson2011})
\begin{equation}
a_{0}^{\min}\leq a_{0}(\x)\leq a_{0}^{\max}\quad\text{a.e. in }\Omega\label{eq:assumption:on:a0}
\end{equation}
for some constants $0 < a_0^{\min} \le a_0^{\max} < \infty$
and that  
\begin{equation}
   \tau:=\frac{1}{a_{0}^{\min}} \bigg\| \sum_{m=1}^{\infty}|a_{m}| \bigg\|_{\infty}<1,
   \label{eq:assumption:on:ai-1}
 \end{equation}
where $\|\cdot\|_{\infty}$ denotes the norm in $L^{\infty}(\Omega)$.
In this case, an elementary calculation shows~that
\begin{equation}
0<a_{\min}\leq a(\x,\y)\leq a_{\max}<\infty\quad\text{a.e. in }\Omega\times\bGamma
\label{eq:assumption:on:a}
\end{equation}
with
$a_{\min} := a_0^{\min} (1 - \tau)$ and 
$a_{\max} := a_0^{\max} + a_0^{\min} \tau$.

\begin{remark}
\rev{As an alternative to \Refx{eq:assumption:on:ai-1}, one can make a weaker assumption:
\begin{equation} \label{eq:assumption:on:ai-1:KubPul}
   \widetilde\tau := \bigg\| a_{0}^{-1} \sum_{m=1}^{\infty} |a_{m}| \bigg\|_{\infty} < 1.
\end{equation}
In this case, one has
\begin{equation} \label{eq:assumption:on:a:KubPul}
   0< \tilde a_{\min} \le a_{0}(\x) (1-\tilde\tau) \leq a(\x,\y) \leq a_{0}(\x) (1+\tilde\tau) \leq \tilde a_{\max} < \infty\ \
   \text{a.e. in }\Omega\times\bGamma
\end{equation}
with $\tilde a_{\min} := a_0^{\min} (1-\tilde\tau)$ and $\tilde a_{\max} := a_{0}^{\max} (1+\tilde\tau)$.
We refer to~{\rm \cite[Section~2.3]{KubinovaP_20_BPS}} for a detailed discussion of assumption~\Refx{eq:assumption:on:ai-1:KubPul}
and its comparison to the one in~\Refx{eq:assumption:on:ai-1}.}
\end{remark}

\subsection{Variational formulation and Galerkin approximations} \label{sec:weak:and:Galerkin}

Let $V$ be defined as in \Refx{eq:tensorspace}. 
The weak formulation of problem~(\ref{eq:strong:form}) reads as
follows: 
find $u \,{\in}\, V$ such~that
\begin{equation}
\A(u,v)=\F(v)\quad \forall\, v\in V,\label{eq:weak:form}
\end{equation}
where the bilinear form $\A:V\times V\to\RR$ and the linear functional
$\F:V\to\RR$ are defined by 
\begin{align}
\A(v,w) & :=\int_{\bGamma}p(\y)\int_{\Omega}a(\x,\y)\nabla v(\x,\y)\cdot\nabla w(\x,\y)\dd\x \dd\y,
\label{eq:A-form}\\[4pt]
\F(w) & :=\int_{\bGamma}p(\y)\int_{\Omega}f(\x)w(\x,\y)\dd\x \dd\y.
\nonumber
\end{align}
The boundedness \rev{of the parametric coefficient $a$ (see~(\ref{eq:assumption:on:a}) and~\Refx{eq:assumption:on:a:KubPul})}
implies a unique solvability of~(\ref{eq:weak:form}) due to the Lax--Milgram theorem.

We discretize problem~(\ref{eq:weak:form}) by using the Galerkin
projection onto a finite-dimen\-sio\-nal subspace of~$V$
\abrev{constructed as a tensor product of finite-dimensional subspaces} of $H^1_0(\Omega)$ and $L^2_\pi(\bGamma)$.
Let us describe these subspaces in detail.
Let ${\cal T}_h$ denote a conforming, quasi-uniform partition of $\Omega$ into simplices $K$ of
maximum diameter $h$. We define
\[
   X_h \,\abrev{:=}\, \seq{\phi:\Omega\rightarrow\RR : \phi{\mid_K}\in \PP_{q}(K)} \cap C^0(\Omega)
   \;\abrev{\subset}\; H^1_0(\Omega),
\]
where $\PP_{q}(K)$ denotes the space of polynomials of degree $q$ defined on $K$.
We will assume that
\[
X_h=\spn\seq{\phi_{1},\phi_{2},\ldots,\phi_{\Nx}},
\]
where $\phi_j(\x)$ have local support and $\Nx := \dim(X_h)$.

Let us now introduce a polynomial subspace of $L^2_\pi(\bGamma)$.
To this end, for each $m \in \NN$, we first consider the sequence $\{P_j^m(y_m) :\, j \in \NN_0\}$
of univariate polynomials that are orthonormal with respect to 
the inner product $\langle\cdot,\cdot\rangle_{\pi_{m}}$,
such that $P_{j}^{m}$ is a polynomial of degree $j\in\NN_{0}$.
These polynomials form an orthonormal basis of $L_{\pi_{m}}^{2}(\Gamma_{m})$, i.e.,
$L^2_{\pi_m}(\Gamma_m)=\spn\seq{P^m_j :\, j\in\NN_0}.$
Moreover, 
they satisfy the following three-term
 recurrence \cite[Theorem~1.29]{gautschi04}:
 \begin{equation}
   {c^m_{j+1}}P_{j+1}^m(y_m)=(y_m-a^m_j)P_j^m(y_m)-{c^m_j}P_{j-1}^m(y_m),\quad j=0,1,2,\ldots\label{eq:3term}
 \end{equation}
   with $P_{-1}^m(y_m)=0,~P_0^m(y_m)=1/{c^m_0}$, where
   $a^m_j,c^m_j$ are defined in terms of inner-products involving
   the monic orthogonal polynomial counterparts to $P^m_j$ (for
   details, see \cite[Chapter~1]{gautschi04}). For our choice of intervals
   $\Gamma_m=[-1,1]$, the recurrence coefficients $a^m_j,\, c^m_j$ are bounded;
   in particular $a^m_j = 0$ (since the measure $\pi_m$ is symmetric) and
   there holds (see \cite[Theorem~1.28]{gautschi04})
   \begin{equation}
     \label{eq:cmj}
     0<c^m_j\leq 1,\quad m\in\NN,\ \ j=0,1,2,\ldots.
   \end{equation}
Consider now the \emph{index set} of multi-indices $\balpha$ with finite support
\[
   \I:=\seq{\balpha=(\alpha_{1},\alpha_{2},\ldots)\in\NN_{0}^{\NN} :\, \max(\spp\balpha)<\infty},
\]
where $\spp\balpha=\{m\in\NN;\;\alpha_{m}\neq0\}$. 
For each multi-index $\balpha\in\I$, we define the multivariate
polynomial 
\[
\psi_{\balpha}(\y):=\prod_{m\in\spp\balpha}P_{\alpha_{m}}^{m}(y_{m}).
\]
The set $\{\psi_{\balpha} : \balpha\in\I\}$ is an orthonormal basis
of $L_{\pi}^{2}(\bGamma)$,
see~\cite[Theorem~2.12]{schwabgittelson2011}.
Thus, any finite index set $\I_n\subset\I$ \abrev{induces} a finite
dimensional subspace
$\spn\seq{\psi_{\balpha} :\, \balpha\in\I_n}$ of $L_{\pi}^{2}(\bGamma)$.
In this paper, we employ finite index sets of the following type:
\[
  \I_{k}^{M}:=\seq{\balpha\in\I :\, |\balpha|\leq k\text{ and }\alpha_{m}=0\ \;\forall\,m>M},\quad
  k \in \NN_0,\ \ M \in \NN,
\]
where $|\balpha|=\sum_{m\in\spp\balpha}\alpha_{m}$.
We denote the corresponding finite-dimensional subspaces of $L_{\pi}^{2}(\bGamma)$~as
\begin{equation}
  \label{eq:skm}
  \SkM:=\spn\seq{\psi_{\balpha} : \balpha\in\I_{k}^{M}}.
\end{equation}
Thus, $\SkM$ is the space of complete polynomials of degree $\le\,k$ in $M$ variables;
its dimension is given by
\[
   \Ny:=\dim(\SkM)=\card{\I_{k}^{M}}={M+k \choose k}.
\]
Furthermore, 
there exists a bijection $\bkappa:\{1,2,\ldots,\Ny\}\to\I_{k}^{M}$, so that we can also
describe $\SkM$ via the span
$\SkM=\spn\seq{\psi_{\bkappa(j)} :\, 1\leq j\leq \Ny}.$

We can now define the following finite-dimensional subspace of $V$:
\begin{equation}
  \label{eq:VhkM}
  \VhkM \,{:=}\, X_{h} {\otimes} \SkM \,{=}\,
  \spn\!\big\{\varphi_{ij}(\x,\y) \,{:=}\, \phi_i(\x)\psi_{\bkappa(j)}(\y) :\,\!
  1 \,{\leq}\, i \,{\leq}\, \Nx,\ 1 \,{\leq}\, j \,{\leq}\, \Ny\big\}.
\end{equation}
The resulting discrete formulation of \Refx{eq:weak:form} reads: find $\uhkM\in\VhkM$
such that 
\begin{equation}
\A(\uhkM,v)=\F(v)\quad \forall\, v \in V_{hk}^M.\label{eq:sGFEM}
\end{equation}
Using the definition of 
$\VhkM$ in \Refx{eq:VhkM} we write the Galerkin approximation $\uhkM$~as
\begin{equation}
\uhkM(\x,\y)=\sum_{i=1}^{\Nx}\sum_{j=1}^{\Ny}u_{ij}\varphi_{ij}(\x,\y).\label{eq:approx:sol}
\end{equation}
    The Galerkin projection onto the finite-dimensional space $\VhkM$
    defined via the choice of finite index set $\I_k^M$
    can be shown to correspond to a discrete weak formulation
    involving a truncation at $m=M$ of the parametric diffusion
    coefficient $a$ given in~\Refx{eq:affine}. More precisely, if we let
\[
   a_M(\x,\y):=a_{0}(\x)+\sum_{m=1}^Ma_m(\x)y_{m},\quad\x\in \Omega,\ \y\in\bGamma,
\]
then the associated bilinear form
\[\A_M(v,w) :=\int_{\bGamma}p(\y)\int_{\Omega}a_M(\x,\y)\nabla
  v(\x,\y)\cdot\nabla w(\x,\y)\dd\x \dd\y
\]
satisfies (see, e.g.,~\cite[p.~A349]{BespalovPS_14_ENA})
\begin{equation}
\A_M(v,w)=\A(v,w)\quad\forall v,w\in \VhkM.\label{eq:AM}
\end{equation}
%
\abrev{Using representation~(\ref{eq:approx:sol}) of the Galerkin solution,
and setting $v=\varphi_{st}$ in~(\ref{eq:sGFEM})
for $s=1,\ldots,N_{\x}$ and $t=1,\ldots,N_{\y}$, we obtain the following
  linear system:}
\[
   \sum_{i=1}^{\Nx}\sum_{j=1}^{\Ny} u_{ij}\A(\varphi_{ij},\varphi_{st})=\F(\varphi_{st}).
\]
\abrev{This system becomes, using \Refx{eq:AM} and the separable form \Refx{eq:VhkM} of
$\varphi_{ij}$, $\varphi_{st}$ and of
each term in the series expansion~(\ref{eq:affine}) of the diffusion coefficient $a(\x,\y)$,}
\[
   \sum_{m=0}^M\sum_{i=1}^{N_{\x}} \sum_{j=1}^{N_{\y}}
   u_{ij} \int_{\Omega} a_{m}\nabla\phi_{i}\cdot\nabla\phi_{s} \dd\x
   \int_{\bGamma} y_{m} \psi_{\bkappa(j)} \psi_{\bkappa(t)}p \dd\y =
   \int_{\Omega}f\phi_{s}\dd\x\int_{\bGamma}\psi_{\bkappa(t)}p\dd\y,
\]
where we set $y_0=1$.
Therefore, the discrete formulation~(\ref{eq:sGFEM}) yields a
linear system $A\u=\f$ with block structure. Specifically, the
coefficient matrix $A$, the solution vector $\u$, and the right-hand
side vector $\f$ are given by 
\begin{equation}
A=\left[\begin{array}{cccc}
A_{11} & A_{12} & \cdots & A_{1N_{\y}}\\
A_{21} & A_{22} & \cdots & A_{2N_{\y}}\\
\vdots & \vdots & \ddots & \vdots\\
A_{N_{\y}1} & A_{N_{\y}2} & \cdots & A_{N_{\y}N_{\y}}
\end{array}\right],\quad \u=\left[\begin{array}{c}
\u_{1}\\
\u_{2}\\
\vdots\\
\u_{N_{\y}}
\end{array}\right], \quad \f=\left[\begin{array}{c}
\f_{1}\\
\f_{2}\\
\vdots\\
\f_{N_{\y}}
\end{array}\right],
\label{eq:A:u:f}
\end{equation}
respectively, where 
\[
A_{tj}=\langle\psi_{\bkappa(j)},\psi_{\bkappa(t)}\rangle_{\pi}\,K_{0}+\sum_{m=1}^{M}\langle y_{m}\psi_{\bkappa(j)},\psi_{\bkappa(t)}\rangle_{\pi}\,K_{m},\quad t,j=1,\ldots,N_{\y}
\]
with finite element (stiffness) matrices $K_{m}$, $m=0,1,\ldots,M$,
defined by 
\begin{align*}
\left[K_{m}\right]_{si} & := \int_{\Omega}a_{m}\nabla\phi_{i}\cdot\nabla\phi_{s}\,\dd\x,\quad s,i=1,\ldots,N_{\x},
\\[3pt]
\u_{j} & := \left[u_{1j}\;u_{2j}\;\ldots\;u_{N_{\x}j}\right]^{T},\quad j=1,\ldots,N_{\y},
\end{align*}
and 
\[
\left[\f_{t}\right]_{s}:=\langle1,\psi_{\bkappa(t)}\rangle_{\pi}\,\int_{\Omega}f\phi_{s}\,\dd\x,\quad
  s=1,\ldots,N_{\x},\ t=1,\ldots,N_{\y}.
\]
Using Kronecker products for matrices, it is convenient to write the
coefficient matrix $A$ in the following \abrev{compact} form 
\begin{equation}
A=G_{0}\otimes K_{0}+\sum_{m=1}^{M}G_{m}\otimes K_{m},\label{eq:decomp:A}
\end{equation}
where 
\begin{equation}
\left[G_{0}\right]_{tj}:=\langle\psi_{\bkappa(j)},\psi_{\bkappa(t)}\rangle_{\pi},\quad\left[G_{m}\right]_{tj}:=\langle y_{m}\psi_{\bkappa(j)},\psi_{\bkappa(t)}\rangle_{\pi}\label{eq:G:matrices}
\end{equation}
for $m=1,\ldots,M$ and $t,j=1,\ldots,N_{\y}$.

The stochastic Galerkin matrix $A$ is symmetric and positive definite.
Furthermore, as it follows from the theorem below, $A$ is block sparse
with no more than $2M+1$ nonzero block matrices per row.

\begin{theorem} \label{thm:G:matrices} {\rm \cite[Theorems 9.58, 9.59]{lord14}} 
Consider the matrices $G_{m}$ defined
 in~\Refx{eq:G:matrices}
for $m=0,1,\ldots,M$. The matrix $G_{0}$ is the $N_{\y}\times N_{\y}$
identity matrix and each matrix $G_{m}$ for $m=1,2,\ldots,M$ has
\rev{at most} two nonzero entries per row. More precisely, 
\[
\left[G_{m}\right]_{tj}=
\begin{cases}
c_{\gamma_{m}+1}^{m}, & \text{if \ }\gamma_{m}=\beta_{m}-1\text{ \ and \ }\gamma_{\ell}=\beta_{\ell}\ \ \forall\,\ell\in\NN\setminus\{m\},\\[2pt]
c_{\gamma_{m}}^{m}, & \text{if \ }\gamma_{m}=\beta_{m}+1\text{ \ and \ }\gamma_{\ell}=\beta_{\ell}\ \ \forall\,\ell\in\NN\setminus\{m\},\\[2pt]
0, & \text{otherwise,}
\end{cases}
\]
where $\bm{\gamma}=\bkappa(t)$, $\bm{\beta}=\bkappa(j)$ and
$c_{\gamma_{m}}^{m}$ \abrev{are the coefficients arising in the three-term
  recurrence \Refx{eq:3term} which defines
the orthonormal polynomials $P_j^m$.}
\end{theorem}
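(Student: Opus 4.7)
The plan is to prove both assertions by exploiting the tensor product structure of the multivariate basis $\{\psi_{\balpha}\}$ and the separability of the joint density $p(\y) = \prod_m p_m(y_m)$, together with the three-term recurrence~\Refx{eq:3term}.

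First I would treat $G_0$: since $\{\psi_{\balpha} : \balpha\in\I_k^M\}$ is orthonormal in $L^2_\pi(\bGamma)$ (inherited from the orthonormality of each $\{P_j^m\}$ and the product form of $p$), the entry $[G_0]_{tj}=\langle \psi_{\bkappa(j)},\psi_{\bkappa(t)}\rangle_\pi = \delta_{tj}$, so $G_0$ is the $\Ny\times\Ny$ identity.

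For $m\geq 1$, write $\bm{\gamma}=\bkappa(t)$, $\bm{\beta}=\bkappa(j)$ and use $\psi_{\bm{\beta}}(\y) = \prod_{\ell} P^\ell_{\beta_\ell}(y_\ell)$ together with $p(\y)=\prod_\ell p_\ell(y_\ell)$ to apply Fubini and factor the inner product as
\[
   [G_m]_{tj}
   \;=\; \langle y_m P^m_{\beta_m}, P^m_{\gamma_m}\rangle_{\pi_m}
         \prod_{\ell\neq m}\langle P^\ell_{\beta_\ell}, P^\ell_{\gamma_\ell}\rangle_{\pi_\ell}.
\]
By orthonormality in each $L^2_{\pi_\ell}(\Gamma_\ell)$, the product over $\ell\neq m$ equals $1$ if $\beta_\ell=\gamma_\ell$ for all $\ell\neq m$ and vanishes otherwise. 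For the remaining univariate factor I would use the recurrence~\Refx{eq:3term} (with $a^m_j=0$ by symmetry of $\pi_m$) rewritten as $y_m P^m_{\beta_m} = c^m_{\beta_m+1} P^m_{\beta_m+1} + c^m_{\beta_m} P^m_{\beta_m-1}$, and then invoke orthonormality of $\{P^m_j\}$ in $L^2_{\pi_m}(\Gamma_m)$ to deduce that $\langle y_m P^m_{\beta_m}, P^m_{\gamma_m}\rangle_{\pi_m}$ equals $c^m_{\beta_m+1}=c^m_{\gamma_m}$ when $\gamma_m = \beta_m+1$, equals $c^m_{\beta_m}=c^m_{\gamma_m+1}$ when $\gamma_m = \beta_m-1$, and vanishes otherwise. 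This reproduces exactly the three cases in the stated formula.

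The bound of at most two nonzero entries per row then follows immediately: fixing the row index $t$ (equivalently $\bm{\gamma}$), a nonzero entry requires $\beta_\ell = \gamma_\ell$ for all $\ell\neq m$ and $\beta_m \in \{\gamma_m-1,\gamma_m+1\}$, which admits at most two admissible multi-indices $\bm{\beta}\in\I_k^M$. No step is really the "main obstacle"—the proof is essentially a bookkeeping exercise—but care must be taken with the edge case $\beta_m=0$ (where $P^m_{-1}\equiv 0$ so only the $\gamma_m=\beta_m+1$ branch contributes) and with the constraint that $\bm{\beta}$ obtained by incrementing $\gamma_m$ may leave $\I_k^M$ (in which case that column simply does not exist and the row has only one nonzero, still consistent with "at most two").
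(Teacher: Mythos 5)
Your argument is correct and is exactly the standard one behind this result (the paper itself gives no proof, deferring to \cite[Theorems~9.58, 9.59]{lord14}, where the same tensorization of the product measure plus the three-term recurrence \Refx{eq:3term} with $a_j^m=0$ is used). The factorization of $[G_m]_{tj}$ into univariate inner products, the identification $c^m_{\beta_m+1}=c^m_{\gamma_m}$ and $c^m_{\beta_m}=c^m_{\gamma_m+1}$ in the two branches, and your handling of the edge cases $\beta_m=0$ and of multi-indices leaving $\I_k^M$ are all as they should be.
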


\section{Truncation preconditioners} \label{sec:preconditioners}

\rev{In this section, we consider 
a class of preconditioners that are induced by bilinear
forms associated with truncations of the series representation of the parametric diffusion coefficient $a$
(cf.~\cite{ghanem14, KubinovaP_20_BPS}).}
We will show that these \emph{truncated} bilinear forms are equivalent to the bilinear form
arising in the variational formulation of our PDE.
The immediate consequence of this fact is that \rev{the resulting \emph{truncation preconditioners}} will be
\abrev{optimal in some sense to be described below (see Definition~\ref{def:optimal})}.

\subsection{Equivalent bilinear forms and preconditioning} \label{sec:equiv:bilinear:forms}

A generic approach to preconditioner design for
discretizations of variational problems is based on
approximating the bilinear forms arising in the formulation of
the problem. \abrev{For symmetric and coercive problems, the
well-known concept
of equivalence of bilinear forms translates into spectral equivalence
between the coefficient matrix and the preconditioner induced by the
approximating bilinear form; in turn, spectral equivalence enables both the design and
analysis of effective preconditioning techniques.}
We summarize this approach in Proposition~\ref{prop:equiv} below,
which requires the following two definitions.

\begin{definition}
  We say that \abrev{positive definite} symmetric bilinear forms
  $\A, \B :V\times V\rightarrow\RR$ are equivalent if there exist positive constants
  $\uptheta,\Uptheta$ such that for all $v\in V$ there holds
  $$\uptheta\B(v,v)\leq \A(v,v)\leq \Uptheta\B(v,v).$$
\end{definition}


\begin{definition}
  We say that symmetric positive definite matrices $A,B\in\RR^{n\times n}$ are spectrally equivalent
  if
  there exist positive constants $\uptheta,\Uptheta$ independent of
  $n$ such that for all $\bv\in\RR^n$ there holds
  $$\uptheta \bv^TB\bv\leq \bv^TA\bv \leq \Uptheta\bv^TB\bv.$$
  \abrev{In this case, we write $A\sim B$}.
\end{definition}

\begin{remark}
  The relation $\sim$ is an equivalence relation. In particular,
  transitivity will be relevant in our subsequent discussion.
\end{remark}

\abrev{Bilinear form equivalence is connected to the well-known concepts of operator and
spectral equivalence (see~\cite{dyakonov66},~\cite{fmp90}) as well as
norm-equivalent preconditioners (see~\cite{loghinwathen04}). In this
context, the following result is key to our subsequent analysis.}

\begin{proposition} \label{prop:equiv}
  \abrev{Let $\A,\B$ denote positive definite symmetric bilinear forms on
  $V\times V$ which are equivalent.}
  Let $V_n=\spn\seq{\varphi_1,\ldots,\varphi_n}\subset V$
  and let $A,B\in\RR^{n\times n}$ be defined as follows
  $$A_{ij}=\A(\varphi_j,\varphi_i),\qquad B_{ij}=\B(\varphi_j,\varphi_i).$$
  Then $A\sim B$ and the spectrum of
  $B^{-1}A$ satisfies
  \begin{equation}
  \Lambda(B^{-1}A)\subset[\uptheta,\Uptheta],\label{eq:eigbound}
\end{equation}
  where $\uptheta,\Uptheta$ are the constants of equivalence for
  $\A,\B$. 
\end{proposition}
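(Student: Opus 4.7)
The plan is to transfer the equivalence of bilinear forms directly to a quadratic form inequality between the matrices $A$ and $B$, and then read off the spectral bounds from the generalized Rayleigh quotient.

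First, I would set up the correspondence between vectors $\mathbf{v} = (v_1, \ldots, v_n)^T \in \RR^n$ and elements $v = \sum_{i=1}^n v_i \varphi_i \in V_n$. A short calculation, using bilinearity and the symmetry of $\A$, shows that
\[
   \mathbf{v}^T A \mathbf{v} = \sum_{i,j=1}^n v_i v_j \A(\varphi_j, \varphi_i) = \A(v, v),
\]
and analogously $\mathbf{v}^T B \mathbf{v} = \B(v, v)$. This identification is the crucial bridge; once it is in place, everything else is essentially pushing the equivalence of bilinear forms through this dictionary.

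Next, I would apply the hypothesized equivalence $\uptheta \B(v,v) \le \A(v,v) \le \Uptheta \B(v,v)$ pointwise in $V_n \subset V$ to obtain
\[
   \uptheta\, \mathbf{v}^T B \mathbf{v} \;\le\; \mathbf{v}^T A \mathbf{v} \;\le\; \Uptheta\, \mathbf{v}^T B \mathbf{v} \qquad \forall\, \mathbf{v} \in \RR^n.
\]
Since the constants $\uptheta, \Uptheta$ come from the bilinear form equivalence on $V$, they are in particular independent of $n$, which gives $A \sim B$ in the sense of the definition above. Note that positive definiteness of $\B$ together with linear independence of $\{\varphi_i\}$ ensures $B$ is SPD, so $B^{-1}$ is well defined.

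For the spectral inclusion, I would argue via the generalized eigenvalue problem $A\mathbf{w} = \lambda B \mathbf{w}$. Since $A$ and $B$ are symmetric and $B$ is SPD, $B^{-1}A$ is similar to the symmetric matrix $B^{-1/2} A B^{-1/2}$, so its spectrum is real; and for any eigenpair $(\lambda, \mathbf{w})$ with $\mathbf{w} \neq \zr$, premultiplying by $\mathbf{w}^T$ yields
\[
   \lambda = \frac{\mathbf{w}^T A \mathbf{w}}{\mathbf{w}^T B \mathbf{w}} \in [\uptheta, \Uptheta]
\]
by the quadratic form inequality just established. This gives~\Refx{eq:eigbound}. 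The only subtlety worth highlighting explicitly is the step that maps an abstract coercivity/continuity pair $(\uptheta,\Uptheta)$ on the infinite-dimensional space $V$ into uniform (i.e., $n$-independent) spectral bounds on a growing sequence of discrete subspaces $V_n$; this is what underlies the subsequent \emph{optimality} claim, and it is really the only conceptual point rather than a computational obstacle in the proof.
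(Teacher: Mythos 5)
Your proof is correct, and it is the standard argument: the paper itself states this proposition without proof, treating it as a well-known consequence of bilinear-form/spectral equivalence (citing Dyakonov, Faber--Manteuffel--Parter, and Loghin--Wathen), and your identification $\mathbf{v}^T A\mathbf{v}=\A(v,v)$ followed by the generalized Rayleigh-quotient bound is precisely the reasoning being invoked. Nothing is missing; the one point you flag (that $\uptheta,\Uptheta$ come from the continuous level and are therefore automatically $n$-independent) is indeed the whole content of the result.
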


The above result motivates the following definition.

\begin{definition}\label{def:optimal}
Let $A,B\in\RR^{n\times n}$ satisfy \Refx{eq:eigbound} with
constants $\theta,\Theta$ independent of $n$. Then $B$ is said to be
an optimal preconditioner for $A$ with respect to the problem size $n$.
\end{definition}

Preconditioner optimality translates into performance optimality. In
particular, it is well-known that the preconditioned Conjugate
Gradient algorithm applied to the linear system $A\u=\f$ with optimal
preconditioner $B$ converges in a number of steps independent of $n$.
Our aim is to construct optimal preconditioners with respect to
  the problem size for the coefficient matrix in \Refx{eq:A:u:f}.  We do this by first adapting the result of Proposition \ref{prop:equiv} to the parametric elliptic problem~\Refx{eq:strong:form}.
We will need the following auxiliary result.

\begin{lemma}\label{lem: equiv tool}  Let
  $p:\boldsymbol{\Gamma}\to\mathbb{R}_{+}$ and
  assume that $b_i:\Omega\times\bGamma \,{\to \RR_{+}}$ $(i=1,2)$ satisfy
  $$0<\beta_i^{\min}\leq b_i(\x,\y)\leq \beta_i^{\max}\quad
  {\text{a.e. in $\Omega\times\bGamma \ni (\x,\y)$}}.$$
  Define the bilinear
  forms $\B_i:V\times V \to \RR$ via
  $$\B_i(v,w)=\int_{\bGamma}p(\y)\int_{\Omega}b_{i}(\x,\y)\nabla
  v(\x,\y)\cdot\nabla w(\x,\y)\dd\x \dd\y,\quad {i = 1,2}.$$
  Then the bilinear forms $\B_i$ are equivalent:
  $$\uptheta\B_2(v,v)\leq \B_1(v,v)\leq
  \Uptheta\B_2(v,v)\quad \forall\, v \in V,$$
  where
  $$\uptheta=\frac{\beta_1^{\min}}{\beta_2^{\max}},\qquad \Uptheta=\frac{\beta_1^{\max}}{\beta_2^{\min}}.$$
\end{lemma}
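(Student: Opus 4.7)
The plan is to reduce the lemma to a pointwise comparison of the integrands, relying on the non-negativity of $p(\y)$, $b_i(\x,\y)$, and $|\nabla v(\x,\y)|^2$. Specifically, for any $v \in V$, both $\B_1(v,v)$ and $\B_2(v,v)$ are integrals over $\Omega \times \bGamma$ of expressions of the form $p(\y)\, b_i(\x,\y)\, |\nabla v(\x,\y)|^2$, each of which is almost-everywhere non-negative. So the two-sided bounds on $b_1/b_2$ transfer directly to bounds on $\B_1/\B_2$.

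First I would introduce the auxiliary (semi)norm
\[
   \abs{v}^2 := \int_{\bGamma} p(\y) \int_\Omega \abs{\nabla v(\x,\y)}^2 \dd\x \dd\y,
\]
which is finite for $v \in V$ (in fact it is the square of the $V$-norm). Then from the pointwise bound $b_1(\x,\y) \le \beta_1^{\max}$, almost everywhere in $\Omega \times \bGamma$, and the non-negativity of $p(\y) |\nabla v|^2$, I would deduce the upper bound $\B_1(v,v) \le \beta_1^{\max}\,\abs{v}^2$. Similarly, using $b_2(\x,\y) \ge \beta_2^{\min}$ a.e., I would obtain $\B_2(v,v) \ge \beta_2^{\min}\,\abs{v}^2$, hence $\abs{v}^2 \le (\beta_2^{\min})^{-1} \B_2(v,v)$. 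Combining these yields
\[
   \B_1(v,v) \le \frac{\beta_1^{\max}}{\beta_2^{\min}}\, \B_2(v,v) = \Uptheta\, \B_2(v,v).
\]

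The lower bound is completely symmetric: the pointwise inequalities $b_1 \ge \beta_1^{\min}$ and $b_2 \le \beta_2^{\max}$ give $\B_1(v,v) \ge \beta_1^{\min}\,\abs{v}^2$ and $\B_2(v,v) \le \beta_2^{\max}\,\abs{v}^2$, from which
\[
   \B_1(v,v) \ge \frac{\beta_1^{\min}}{\beta_2^{\max}}\, \B_2(v,v) = \uptheta\, \B_2(v,v).
\]

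There is no genuine obstacle here; the only point that needs a sentence of care is verifying that the integrand is non-negative almost everywhere (so that the pointwise bounds on $b_i$ may be passed through the iterated integral), which follows from $p \ge 0$, $b_i \ge \beta_i^{\min} > 0$, and $\abs{\nabla v}^2 \ge 0$. Fubini's theorem (applicable since $v \in V = L^2_\pi(\bGamma) \otimes H^1_0(\Omega)$) handles the interchange of integration order implicit in writing $\B_i(v,v)$ as $\int\int b_i\, p\, |\nabla v|^2$ without a specified order.
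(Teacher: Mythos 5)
Your proof is correct and follows essentially the same route as the paper: both arguments transfer the almost-everywhere bounds on the coefficients through the non-negative integrand $p\,|\nabla v|^2$ to obtain the constants $\uptheta=\beta_1^{\min}/\beta_2^{\max}$ and $\Uptheta=\beta_1^{\max}/\beta_2^{\min}$. The only cosmetic difference is that you pass through the intermediate seminorm $\int_{\bGamma}p\int_{\Omega}|\nabla v|^2$, whereas the paper bounds the ratio $b_1/b_2$ by its essential supremum inside the integral directly; the resulting bounds are identical.
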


\begin{proof}
  For any $v\in V$, we have 
  \begin{align*}
             \B_1(v,v) 
              & =\int_{\boldsymbol{\Gamma}}p(\y)\int_{\Omega}
              \left(\frac{b_{1}(\x,\y)}{b_{2}(\x,\y)}\right)b_{2}(\x,\y)\nabla v(\x,\y)\cdot\nabla v(\x,\y)\dd\x \dd\y
              \\[4pt]
              & \leq\left(
              {\operatorname*{ess\;sup}_{(\x,\y)\in \Omega\times\boldsymbol{\Gamma}}}\;
              \frac{b_{1}(\x,\y)}{b_{2}(\x,\y)}\right)
              \int_{\boldsymbol{\Gamma}}p(\y)\int_{\Omega}b_{2}(\x,\y)\nabla v(\x,\y)\cdot\nabla v(\x,\y)\dd\x \dd\y
              \\[4pt]
              & \leq\frac{\beta_{1}^{\max}}{\beta_{2}^{\min}}\, \B_2(v,v).
  \end{align*}
  The lower bound follows analogously. 
\end{proof}

The boundedness required in the above lemma for 
\abrev{$b_i(\x,\y)$} holds for the \abrev{parametric diffusion coefficient} 
$a(\x,\y)$ (see~\Refx{eq:assumption:on:a}).
\abrev{In the next \rev{subsection} we show}
that \abrev{assumptions~\Refx{eq:assumption:on:a0},~\Refx{eq:assumption:on:ai-1},
which guarantee~\Refx{eq:assumption:on:a}, also yield}
boundedness of \abrev{\emph{truncated} expansions} of the \abrev{coefficient} $a(\x,\y)$.
\abrev{That result} will \abrev{enable our main goal---\rev{the analysis
of truncation preconditioners} for stochastic Galerkin matrices}.

\subsection{Spectral analysis} \label{sec:spectral:analysis}

In analogy with (\ref{eq:assumption:on:ai-1}), we define
\begin{equation}
  \label{eq:tauess}
  \tau_0 := 0,\quad
  \tau_r:=\frac{1}{a_{0}^{\min}} \bigg\| \sum_{m=1}^{r} \abs{a_m} \bigg\|_{\infty},\ \ r \in \NN.
\end{equation}
Note that $(\tau_r)_{r \in \NN_0}$ is a monotonic increasing
sequence, which is bounded from above (cf. \Refx{eq:assumption:on:ai-1}),~i.e.,
\[
  0 \le \tau_r \leq \tau_{r+1} \le \tau<1,\quad r \in \NN_0.
\]
%
First, we establish the boundedness of \emph{truncated} expansions 
of the parametric diffusion coefficient~$a$.

\begin{lemma}\label{lem:asbound}
  \abrev{Assume that \Refx{eq:assumption:on:a0} and \Refx{eq:assumption:on:ai-1} hold.
  Let $r\in\NN_0$ and define $a_r:\Omega\times\bGamma\, \to \RR$ to be the
  finite~sum
  \begin{equation}
    a_r(\x,\y)\; \abrev{:=}\; a_{0}(\x)+\sum_{m=1}^ra_m(\x)y_m.
  \label{eq:form of a (param)-1}
  \end{equation}
  Then $a_r(\x,\y)$ is positive and bounded almost everywhere
  in~$\Omega\times\bGamma$.
}
\end{lemma}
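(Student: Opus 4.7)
The plan is to derive two-sided bounds on $a_r(\x,\y)$ that are uniform in $r$, matching (up to using $\tau$ in place of $\tau_r$) the bounds already established for the full series $a(\x,\y)$ in~\Refx{eq:assumption:on:a}. The key observation is that the sequence $(\tau_r)_{r\in\NN_0}$ defined in~\Refx{eq:tauess} is monotonically increasing and bounded above by $\tau<1$, so any bound involving $\tau_r$ can be replaced by the same expression with $\tau$.

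First I would apply the triangle inequality pointwise in $(\x,\y)$ to obtain
\[
   \bigg| \sum_{m=1}^{r} a_m(\x) y_m \bigg| \;\le\; \sum_{m=1}^{r} |a_m(\x)|\,|y_m| \;\le\; \sum_{m=1}^{r} |a_m(\x)|,
\]
where the second inequality uses $|y_m|\le 1$ because $\Gamma_m=[-1,1]$. Taking essential supremum over $\x\in\Omega$ and recalling the definition of $\tau_r$ in~\Refx{eq:tauess}, this yields
\[
   \bigg| \sum_{m=1}^{r} a_m(\x) y_m \bigg| \;\le\; a_0^{\min}\,\tau_r \quad\text{a.e.\ in } \Omega\times\bGamma.
\]

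Next I would combine this with the pointwise bounds~\Refx{eq:assumption:on:a0} on $a_0(\x)$. For the lower bound,
\[
   a_r(\x,\y) \;\ge\; a_0(\x) - a_0^{\min}\tau_r \;\ge\; a_0^{\min}(1-\tau_r) \;\ge\; a_0^{\min}(1-\tau) \;=\; a_{\min} \;>\; 0,
\]
where the last strict inequality uses assumption~\Refx{eq:assumption:on:ai-1}. For the upper bound,
\[
   a_r(\x,\y) \;\le\; a_0(\x) + a_0^{\min}\tau_r \;\le\; a_0^{\max} + a_0^{\min}\tau \;=\; a_{\max} \;<\; \infty.
\]
Both inequalities hold a.e.\ in $\Omega\times\bGamma$, which gives the claimed positivity and boundedness.

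There is no genuine obstacle here: the statement is essentially the same as the well-posedness bound~\Refx{eq:assumption:on:a} for the full expansion, and the proof reduces to a triangle inequality plus the uniform-in-$r$ bound $\tau_r \le \tau$. The only thing worth emphasizing in the write-up is that the constants $a_{\min}$ and $a_{\max}$ are \emph{the same} for every $r\in\NN_0$, since this uniformity is precisely what will be needed when Lemma~\ref{lem: equiv tool} is applied to the bilinear forms associated with $a_r$ and $a$ to establish optimality of the truncation preconditioner $P_r$ independently of the truncation level.
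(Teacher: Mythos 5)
Your proof is correct and follows essentially the same route as the paper's: triangle inequality with $|y_m|\le 1$, the definition of $\tau_r$ in~\Refx{eq:tauess}, and the bound $\tau_r\le\tau<1$ to conclude positivity. The only (harmless) difference is that you relax at the end to the $r$-independent constants $a_{\min},a_{\max}$, whereas the paper deliberately records the sharper $r$-dependent bounds $\eta_r^{\min}=a_0^{\min}(1-\tau_r)$ and $\eta_r^{\max}=a_0^{\max}+a_0^{\min}\tau_r$, since these are reused in Theorem~\ref{thm: B equiv BN} to obtain the equivalence constants $\uptheta_r,\Uptheta_r$.
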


\begin{proof}
  The case $r=0$ follows from the boundedness and positivity
  assumptions~\Refx{eq:assumption:on:a0} on
  $a_0(\x)$. Consider now $r\in\NN$. Since $\abs{y_m}\leq 1$, using the definition~\Refx{eq:tauess} of
  $\tau_r$ we obtain
  \[
    \left|a_r(\x,\y)-a_{0}(\x)\right|
    =\bigg|\sum_{m=1}^ra_m(\x)y_m\bigg|\leq\sum_{m=1}^r\left|a_m(\x)\right|\leq a_0^{\min}\tau_r
    \qquad
    \text{a.e. in $\Omega \times \bGamma$}.
  \]
  Hence,
\begin{equation} \label{eq:bound:ar:a0}
    a_{0}(\x)-a_0^{\min}\tau_r \leq a_r(\x,\y)\leq a_{0}(\x)+a_0^{\min}\tau_r
\end{equation}
  and using the boundedness of $a_0(\x)$ (see~\Refx{eq:assumption:on:a0}), we get
  \begin{equation}
   \eta_r^{\min}:=a_{0}^{\min}-a_0^{\min}\tau_r \leq a_r(\x,\y)\leq a_{0}^{\max}+a_0^{\min}\tau_r=:\eta_r^{\max}
   \quad
   \text{a.e. in $\Omega \times \bGamma$}.\label{eq:etadef}
 \end{equation}
The proof concludes by noting that $\eta_r^{\min} > 0$ since $\tau_r \leq \tau < 1$
 (cf. \Refx{eq:tauess} and \Refx{eq:assumption:on:ai-1}).
\end{proof}

Combining Lemmas~\ref{lem: equiv tool} and~\ref{lem:asbound}, we obtain the following result.

\begin{theorem}\label{thm: B equiv BN}
  Let $a:\Omega\times\bGamma \to \RR_{+}$ be a parametric diffusion coefficient
  given by \Refx{eq:affine} and let $\A : V \times V \to \RR$ be the associated bilinear form defined in~\Refx{eq:A-form}.
  Assume~\Refx{eq:assumption:on:a0} and~\Refx{eq:assumption:on:ai-1} hold.
  Let $a_r$ be given by~\Refx{eq:form of a (param)-1} and define the associated bilinear form as
  \[
    \A_r(v,w) := \int_{\boldsymbol{\Gamma}}p(\y)\int_{\Omega}a_r(\x,\y)\nabla v(\x,\y)\cdot\nabla w(\x,\y)\dd\x \dd\y.
  \]
  Then $\A$ and $\A_r$ are equivalent for any $r\in\NN_0$.
\end{theorem}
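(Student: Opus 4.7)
The plan is to apply Lemma~\ref{lem: equiv tool} directly with $b_1 := a$ and $b_2 := a_r$. For this, the only prerequisite is that each of $a$ and $a_r$ is bounded, almost everywhere on $\Omega\times\bGamma$, between positive constants independent of~$(\x,\y)$, so that the essential supremum of the ratio $a/a_r$ (and its reciprocal) is finite.

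First, I would observe that the hypothesis \Refx{eq:assumption:on:ai-1} together with \Refx{eq:assumption:on:a0} has already been shown in~\S\ref{sec:diff:coeff} to yield the two-sided bound \Refx{eq:assumption:on:a}, so $a$ satisfies the hypothesis of Lemma~\ref{lem: equiv tool} with $\beta^{\min} = a_{\min} > 0$ and $\beta^{\max} = a_{\max}<\infty$. Next, Lemma~\ref{lem:asbound} provides exactly the analogous two-sided bound for the truncated coefficient: $\eta_r^{\min} \le a_r(\x,\y) \le \eta_r^{\max}$ a.e., with $\eta_r^{\min}>0$ by monotonicity of $(\tau_r)$ and the strict inequality $\tau<1$. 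Hence $a_r$ also satisfies the hypothesis of Lemma~\ref{lem: equiv tool}.

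Applying Lemma~\ref{lem: equiv tool} then gives, for every $v\in V$,
\[
   \frac{a_{\min}}{\eta_r^{\max}}\,\A_r(v,v) \;\le\; \A(v,v) \;\le\; \frac{a_{\max}}{\eta_r^{\min}}\,\A_r(v,v),
\]
which is exactly the equivalence of $\A$ and $\A_r$. I would also briefly note the degenerate case $r=0$ for transparency: then $a_r=a_0(\x)$ and the bounds reduce to those in \Refx{eq:assumption:on:a0}, so the argument is unchanged.

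There is no real obstacle here: the theorem is essentially a bookkeeping combination of the two preceding lemmas, with all the analytic work (bounding $a_r$ uniformly from above and away from zero) already packaged in Lemma~\ref{lem:asbound}. The only point worth care is recording the explicit equivalence constants $\uptheta_r = a_{\min}/\eta_r^{\max}$ and $\Uptheta_r = a_{\max}/\eta_r^{\min}$, since these will presumably be needed in the subsequent spectral analysis of the truncation preconditioner $P_r$ via Proposition~\ref{prop:equiv}.
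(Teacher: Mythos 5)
Your proposal is correct and follows exactly the paper's own argument: invoke the two-sided bound \Refx{eq:assumption:on:a} for $a$, the bound \Refx{eq:etadef} from Lemma~\ref{lem:asbound} for $a_r$, and apply Lemma~\ref{lem: equiv tool}, obtaining the same constants $\uptheta_r = a_{\min}/\eta_r^{\max}$ and $\Uptheta_r = a_{\max}/\eta_r^{\min}$ that the paper records in \Refx{eq:thetas}. Nothing is missing.
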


\begin{proof}
 By \Refx{eq:assumption:on:a}, the diffusion coefficients $a$ is bounded.
 Furthermore, by Lem\-ma~\ref{lem:asbound}, the coefficient $a_r$, $r \in \NN_0$, is bounded as well.
Consequently, by Lemma \ref{lem: equiv tool}, the bilinear forms are
 equivalent. In particular,
 $$\uptheta_r\A_r(v,v)\leq \A(v,v)\leq
 \Uptheta_r\A_r(v,v),$$
 where 
 \begin{equation}
 \uptheta_r := \frac{a_{\min}}{\eta_r^{\max}}=\frac{(1-\tau)a_0^{\min}}{a_{0}^{\max}+a_0^{\min}\tau_r},\qquad
 \Uptheta_r := \frac{a_{\max}}{\eta_r^{\min}}=\frac{a_{0}^{\max}+a_0^{\min}\tau}{(1-\tau_r)a_0^{\min}}\label{eq:thetas}
\end{equation}
with $\eta_r^{\min}$ and $\eta_r^{\max}$ defined in~\Refx{eq:etadef}.
\end{proof}

\begin{remark}
  The equivalence of the bilinear form $\A_0$ associated with the pa\-ra\-me\-ter-free term $a_0(\x)$ in~\Refx{eq:affine}
  and the bilinear form $\A$ is well known (see, e.g., {\rm \cite[eq.~(2.5)]{BespalovPRR_19_CAS}}).
  Theorem~{\rm \ref{thm: B equiv BN}} extends this result to the case of arbitrary finite truncation
  of the affine-parametric coefficient $a(\x,\y)$.
\end{remark}

\begin{remark}
  The constants $\uptheta_r,\, \Uptheta_r$ in~\Refx{eq:thetas}
  depend on \abrev{$\tau$}, 
  $a_0^{\min}$, $a_0^{\max}$ and indirectly on $r$, via $\tau_r$.
\end{remark}

Theorem~\ref{thm: B equiv BN}, combined with Proposition
\ref{prop:equiv}, indicates that the bilinear form $\A_r$ induces a family of
preconditioners for the SGFEM matrix $A$ in~\Refx{eq:A:u:f}.
This is made precise in the next theorem which is the main result of this~section.

\begin{theorem}\label{thm:truncprec}
  Let $\A,\, \A_r$ be defined as in Theorem {\rm \ref{thm: B equiv BN}} and
  assume \Refx{eq:assumption:on:a0} and~\Refx{eq:assumption:on:ai-1} hold.
  Let $\{\varphi_{ij}\}$ be the tensor-product basis
  for the finite dimensional space $\VhkM$ in~\Refx{eq:VhkM}
  and $A = \left[\A(\varphi_{ij},\varphi_{st})\right]$ be the associated SGFEM matrix.
  \abrev{For a fixed $r \in \NN_0$}
  define the preconditioner $P_r$ via
  $$P_r:=\left[\A_r(\varphi_{ij},\varphi_{st})\right].$$
  Then $P_r\sim A$ and the spectrum of $P_r^{-1}A$ satisfies
  \begin{equation}
  \Lambda(P_r^{-1}A)\subset[\uptheta_r,\Uptheta_r]\label{eq:bounds}
\end{equation}
with $\uptheta_r,\, \Uptheta_r$ defined in \Refx{eq:thetas}.
\end{theorem}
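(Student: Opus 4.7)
The plan is essentially to chain together the two key ingredients already in place: the bilinear-form equivalence from Theorem~\ref{thm: B equiv BN} and the matrix-level consequence recorded in Proposition~\ref{prop:equiv}. In other words, the theorem is the natural discrete shadow of an already-established continuous statement, so the proof should be short and mostly a matter of invoking the right result on the right space.

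First I would fix $r \in \NN_0$ and appeal to Theorem~\ref{thm: B equiv BN} to obtain
\[
  \uptheta_r \A_r(v,v) \le \A(v,v) \le \Uptheta_r \A_r(v,v) \quad \forall\, v \in V,
\]
with $\uptheta_r, \Uptheta_r$ given by~\Refx{eq:thetas}. Since $\VhkM \subset V$, this two-sided bound holds in particular for every $v \in \VhkM$. Both $\A$ and $\A_r$ are symmetric (by inspection of \Refx{eq:A-form} and the definition of $\A_r$) and are positive definite on $V$ thanks to the lower bounds $a_{\min} > 0$ and $\eta_r^{\min} > 0$ supplied by~\Refx{eq:assumption:on:a} and Lemma~\ref{lem:asbound}.

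Next I would apply Proposition~\ref{prop:equiv} with $V_n = \VhkM$ and the enumerated tensor-product basis $\{\varphi_{ij}\}$ of \Refx{eq:VhkM}, identifying $\B$ with $\A_r$. The matrix assembled from $\A$ is exactly $A$ (as recorded in Section~\ref{sec:weak:and:Galerkin}), while the matrix assembled from $\A_r$ is, by definition, $P_r$. The proposition then yields simultaneously the spectral equivalence $P_r \sim A$ and the spectral inclusion $\Lambda(P_r^{-1} A) \subset [\uptheta_r, \Uptheta_r]$, which is exactly~\Refx{eq:bounds}.

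There is no real obstacle here; the entire content of the theorem has already been loaded into Theorem~\ref{thm: B equiv BN} and Proposition~\ref{prop:equiv}. The only detail worth noting explicitly is that the constants $\uptheta_r, \Uptheta_r$ from~\Refx{eq:thetas} depend only on $a_0^{\min}, a_0^{\max}, \tau, \tau_r$ and in particular are independent of the discretization parameters $\Nx$, $M$, $k$; together with Definition~\ref{def:optimal} this is what delivers the advertised optimality of $P_r$ as a preconditioner for $A$.
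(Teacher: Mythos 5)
Your proposal is correct and follows exactly the route the paper takes: the theorem is stated as the immediate consequence of combining Theorem~\ref{thm: B equiv BN} (equivalence of $\A$ and $\A_r$ with constants $\uptheta_r,\Uptheta_r$) with Proposition~\ref{prop:equiv} applied on $V_n=\VhkM$ with $\B=\A_r$, which is precisely what you do. Your added remarks on symmetry, positive definiteness, and the independence of the constants from $\Nx$, $M$, $k$ are consistent with the paper's surrounding discussion.
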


\begin{remark}
To obtain alternative spectral bounds under the assumption in~\Refx{eq:assumption:on:ai-1:KubPul},
we can define (cf.~\Refx{eq:tauess})
\[
  \ttau_0 := 0,\quad
  \ttau_r := \bigg\| a_{0}^{-1} \sum_{m=1}^{r} \abs{a_m} \bigg\|_{\infty},\ \ r \in \NN.
\]
Note that $(\ttau_r)_{r \in \NN_0}$ is a monotonic increasing sequence bounded from above by~$\ttau$.
Then, instead of~\Refx{eq:etadef} we obtain by using~\Refx{eq:assumption:on:a:KubPul}
\[
   \frac{1 - \ttau_r}{1 + \ttau}\, a(\x,\y) \leq
   (1-\ttau_r) a_{0}(\x) \leq
   a_r(\x,\y) \leq
   (1+\ttau_r) a_{0}(\x) \leq
   \frac{1 + \ttau_r}{1 - \ttau}\, a(\x,\y)\ \
   \text{a.e. in }\Omega\times\bGamma.
\]
This implies the equivalence of bilinear forms $\A,\; \A_0,\; \A_r$ as follows:
\[
   (1-\ttau_r) \A_0(v,v) \le \A_r(v,v) \le (1+\ttau_r) \A_0(v,v)\quad
   \forall\, v \in V
\]
and
\[
   \frac{1-\ttau}{1+\ttau_r}\, \A_r(v,v) \le \A(v,v) \le \frac{1+\ttau}{1-\ttau_r}\, \A_r(v,v)\quad
   \forall\, v \in V.
\]
Therefore, by Proposition~{\rm \ref{prop:equiv}}, the following spectral bounds hold: 
\[
   \Lambda(P_0^{-1}P_r) \subset [ 1 - \ttau_r, 1 + \ttau_r ]
   \text{ \ and \ }
   \Lambda(P_r^{-1}A) \subset \bigg[ \frac{1-\ttau}{1+\ttau_r}, \frac{1+\ttau}{1-\ttau_r} \bigg].
\]
\end{remark}

\abrev{The preconditioners $P_r$, that we will refer to as \emph{truncation preconditioners},
are induced by the bilinear form $\A_r$, $r\in\NN_0$.
Therefore, by~\Refx{eq:AM}, there holds $P_r=A$ for all $r\geq M$. In practice, the values of
$r$ are expected to be small in order to allow for sparse
approximations of $A$ which can be efficiently implemented. Note also that $P_r$ can be written as a sum of Kronecker products, just
as was the case for the SGFEM matrix $A$ (cf.~\Refx{eq:decomp:A}):
\begin{equation}
  P_r=G_0\otimes K_0+\sum_{m=1}^rG_m\otimes K_m.\label{eq:Pr}
\end{equation}
}

\abrev{The result of Theorem~\ref{thm:truncprec}
indicates that the performance of the preconditioned Conjugate
Gradient method will be independent of discretization parameters,
but may depend on the choice of truncation parameter $r$. Since
$$\A(v,v)=\A_r(v,v)+\R(v,v)$$
with
$$\R(v,v):=\int_{\boldsymbol{\Gamma}}p(\y)\int_{\Omega}(a(\x,\y)-a_r(\x,\y))\nabla v(\x,\y)\cdot\nabla v(\x,\y)\dd\x \dd\y,$$
a smallness assumption of the form
\begin{equation}
  \label{eq:resbound}
  \abs{\R(v,v)}\leq \varepsilon_r \A(v,v),\quad 0<\varepsilon_r<1
\end{equation}
would allow for the following equivalence of $\A$ and $\A_r$
\begin{equation}
  \label{eq:equiv}
  (1-\varepsilon_r)\A(v,v)\leq \A_r(v,v)\leq (1+\varepsilon_r) \A(v,v).
\end{equation}
Since
$$a(\x,\y)-a_r(\x,\y)=\sum_{m=r+1}^\infty a_m(\x)y_m,$$
by the definition of $\R,$ assumption \Refx{eq:resbound} holds for
sufficiently large $r$. As a result,~\Refx{eq:equiv} implies the
\rev{following} eigenvalue bounds
$$\Lambda(P_r^{-1}A)\subset\left[\frac{1}{1+\varepsilon_r},\frac{1}{1-\varepsilon_r}\right].$$
This suggests that the closer $a_r$ approximates $a$, the tighter the
preconditioned spectrum will be clustered around unity. We will
investigate this conclusion numerically in section~\ref{sec:numerics}.
}

\section{Modified truncation preconditioners} \label{sec:modified:precond}

Any practical implementation of a preconditioner requires an efficient
technique for applying the action of its inverse on a given
vector. Standard approaches include constructing sparse
factorizations, or employing multigrid or multilevel techniques;
domain decomposition methods represent yet another
approach. The potential for parallelism could also be a deciding
factor in the choice of solution method.

The preconditioner $P_r$ \rev{introduced} in the previous section is block-sparse,
with sparsity deteriorating with increasing $r$
(cf. Theorem~\ref{thm:G:matrices}). In the case when $r=1$, the
structure can be shown to be block-tridiagonal under a certain
permutation---this is not an ideal situation, as it requires
additional techniques to ensure an efficient application of the
preconditioner. For this reason, we replace $P_r$ with its
corresponding symmetric block Gauss--Seidel (SBGS) approximation:
\begin{equation}
  \abrev{\tilde{P}_r :=} \left(G_0\otimes K_0+\sum_{m=1}^rL_m\otimes
  K_m\right)(G_0\otimes K_0)^{-1}\left(G_0\otimes K_0+\sum_{m=1}^rL_m^T\otimes
  K_m\right),\label{eq:prt}
\end{equation}
where $L_m+L_m^T=G_m$.
\abrev{The matrix $\tilde{P}_r$ thus} 
represents a sparse approximation to $P_r$
involving block-triangular and block-diagonal matrices. In the remainder of this section we prove that \abrev{$P_r \sim \tilde{P}_r$}
\abrev{and} provide complexity considerations, including
a discussion of implementation. 

\subsection{Analysis of SBGS approximation of $P_r$} \label{sec:SBGS:analysis}

\rev{In this subsection we assume that the ordering of multi-indices in the index set $\I_{k}^{M}$ is such that
the matrices $L_m$ in~\Refx{eq:prt} have at most one nonzero entry per row and per column
(this property holds, e.g., for lexicographic or anti-lexicographic ordering as well as for 
ascending or descending ordering by the total degree of the associated complete polynomials in~$\SkM$).
Let us define}
\begin{equation}
  S_r \;\abrev{:=}\; \sum_{m=1}^rL_m\otimes K_m,\quad D_0 \;\abrev{:=}\; G_0\otimes K_0,
  \label{eq:Sr:D0}
\end{equation}
so that
\begin{equation}
  P_r \;\abrev{\stackrel{\Refx{eq:Pr}}{=}} D_0+S_r+S_r^T
  \label{eq:Pr:mod}
\end{equation}
and
$$\tilde{P}_r \;\abrev{\stackrel{\Refx{eq:prt}}{=}}\; (D_0+S_r)D_0^{-1}(D_0+S_r^T)=P_r+S_r D_0^{-1}S_r^T.$$
Our spectral analysis 
focuses on deriving bounds for the generalized Rayleigh~quotient
\begin{equation}
  \frac{\bv^T\tilde{P}_r\bv}{\bv^TP_r\bv}=1+\frac{\bv^T S_r
  D_0^{-1}S_r^T\bv}{\bv^T(D_0+S_r+S_r^T)\bv},\qquad \bv\in\RR^{\Nx\Ny}\setminus\seq{\bf 0}.
  \label{eq:Rayleigh}
\end{equation}
Since the lower bound is 1, we restrict our attention to deriving an
upper bound for the second term on the right-hand side of~\Refx{eq:Rayleigh}, which we write using the change
of variable $\w=D_0^{1/2}\bv$ as
\begin{equation*}
  \rho(\w)\;\abrev{:=}\; \frac{\w^T\tilde{S}_r\tilde{S}_r^T\w}{\w^T(I+\tilde{S}_r+\tilde{S}_r^T)\w}.
\end{equation*}
Here,
\begin{equation}
   \tilde{S}_r\;\abrev{:=}\; D_0^{-1/2}S_rD_0^{-1/2}=\sum_{m=1}^rL_m\otimes\tilde{K}_m
   \label{eq:Srt}
\end{equation}
with $\tilde{K}_m\;\abrev{:=}\; K_0^{-1/2}K_mK_0^{-1/2}$, using the fact that $G_0=I_{\Ny}$.
Hence,
\begin{equation}
    \rho(\w)\leq\max_{\w\neq{\bf
    0}}\frac{\w^T\tilde{S}_r\tilde{S}_r^T\w}{\w^T\w}\cdot\max_{\w\neq{\bf 0}}\frac{\w^T\w}{\w^T(I+\tilde{S}_r+\tilde{S}_r^T)\w}=
    \frac{\sigma_{\max}^2(\tilde{S}_r)}{\lambda_{\min}(I+\tilde{S}_r+\tilde{S}_r^T)},
    \label{eq:rho:bound}
  \end{equation}
  where $\sigma_{\max}(\cdot)$ and $\lambda_{\min}(\cdot)$ denote, respectively,
the largest singular value and the smallest eigenvalue of a~matrix.
In the next lemma, we provide bounds for $\sigma_{\max}(\tilde{S}_r)$ and $\lambda_{\min}(I+\tilde{S}_r+\tilde{S}_r^T)$ in order to
conclude the derivation of the upper bound on~$\rho$.

\begin{lemma} \label{lm:aux:bounds}
\abrev{Suppose that \Refx{eq:assumption:on:a0} and \Refx{eq:assumption:on:ai-1} hold
\rev{and $\tau_r$ is defined in~\Refx{eq:tauess}}.
}
Let $\tilde{S}_r$ be defined \abrev{by~\Refx{eq:Srt}}. Then
\begin{equation}
     \lambda_{\min}(I+\tilde{S}_r+\tilde{S}_r^T) \geq \rev{1 - \tau_r} 
     \label{eq:bound:lambda}
\end{equation}
and
  \begin{equation}
     \sigma_{\max}(\tilde{S}_r)\leq
     \abrev{\frac{1}{a_0^{\min}}\sum_{m=1}^r \norm{a_m}{\infty}}.
     \label{eq:bound:sigma}
  \end{equation}
\end{lemma}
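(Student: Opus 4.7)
The plan is to handle the two inequalities separately, as they rely on rather different arguments. For the lower bound \Refx{eq:bound:lambda}, my first step is to recognize $I+\tilde{S}_r+\tilde{S}_r^T$ as the symmetric $D_0^{-1/2}$-scaling of $P_r$: combining \Refx{eq:Pr:mod}, \Refx{eq:Srt}, the symmetry of $\tilde{K}_m$, and $G_0=I_{\Ny}$ (so that $D_0=I_{\Ny}\otimes K_0$ and $D_0^{-1/2}=I_{\Ny}\otimes K_0^{-1/2}$), a short computation gives $D_0^{-1/2}P_rD_0^{-1/2}=I+\tilde{S}_r+\tilde{S}_r^T$. Under the change of variables $\w=D_0^{1/2}\bv$ this rewrites the Rayleigh quotient as $\bv^T P_r\bv/(\bv^T D_0\bv) = \A_r(v,v)/\A_0(v,v)$ via the SGFEM isomorphism, where $v\in\VhkM$ has coefficient vector $\bv$. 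It then suffices to prove the pointwise bound $a_r(\x,\y)\ge (1-\tau_r)a_0(\x)$ almost everywhere in $\Omega\times\bGamma$; this follows by combining the lower inequality in \Refx{eq:bound:ar:a0} with $a_0(\x)\ge a_0^{\min}$. Integration against $p(\y)|\nabla v|^2$ then yields $\A_r(v,v)\ge (1-\tau_r)\A_0(v,v)$, and \Refx{eq:bound:lambda} follows.

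For the upper bound \Refx{eq:bound:sigma}, I will invoke the triangle inequality for the spectral norm and the Kronecker identity $\|L_m\otimes\tilde{K}_m\|_2=\|L_m\|_2\|\tilde{K}_m\|_2$ to reduce matters to bounding each factor. For $\|\tilde{K}_m\|_2$, symmetry of $\tilde{K}_m$ together with the substitution $\x=K_0^{1/2}\z$ converts the spectral norm into a generalized Rayleigh quotient of $K_m$ relative to $K_0$; using the stiffness-matrix definitions and the bounds $|a_m(\x)|\le \norm{a_m}{\infty}$ and $a_0(\x)\ge a_0^{\min}$ then gives $\|\tilde{K}_m\|_2\le \norm{a_m}{\infty}/a_0^{\min}$. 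For $\|L_m\|_2$, the ordering assumption on $\I_k^M$ stated at the beginning of Subsection~\ref{sec:SBGS:analysis} ensures that $L_m$ has at most one nonzero entry per row and per column; hence $L_m^T L_m$ is diagonal with entries equal to squares of the corresponding entries of $G_m$, each of which by Theorem~\ref{thm:G:matrices} and \Refx{eq:cmj} is some $c_j^m\in(0,1]$. This gives $\|L_m\|_2\le 1$, and summing over $m$ completes the argument.

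I expect the most delicate step to be the structural claim $\|L_m\|_2\le 1$, since it hinges simultaneously on the (combinatorial) ordering assumption that guarantees the one-nonzero-per-row-and-column pattern of $L_m$ and on the (analytic) fact that the recurrence coefficients $c_j^m$ from \Refx{eq:3term} are bounded by~$1$. Everything else is either a standard spectral-norm manipulation or a one-line pointwise estimate lifted through the bilinear-form identification.
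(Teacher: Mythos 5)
Your proposal is correct and follows essentially the same route as the paper's own proof: the lower bound is obtained by identifying $I+\tilde{S}_r+\tilde{S}_r^T$ with $D_0^{-1/2}P_rD_0^{-1/2}$ and invoking the pointwise inequality $(1-\tau_r)a_0(\x)\le a_r(\x,\y)$ from \Refx{eq:bound:ar:a0} together with the equivalence of $\A_0$ and $\A_r$, while the upper bound uses the triangle inequality, the factorization of singular values over the Kronecker product, the one-nonzero-per-row-and-column structure of $L_m$ with the bound $c_j^m\le 1$, and the generalized eigenvalue estimate $\sigma_{\max}(\tilde{K}_m)\le \norm{a_m}{\infty}/a_0^{\min}$. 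No gaps; the argument matches the paper step for step.
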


\begin{proof}
Since
\[
   I+\tilde{S}_r+\tilde{S}_r^T =
   I+\sum_{m=1}^rG_m\otimes \tilde{K}_m =
   D_0^{-1/2} \bigg( D_0+\sum_{m=1}^rG_m\otimes K_m \bigg) D_0^{-1/2}
   \,{=}\, D_0^{-1/2} P_r D_0^{-1/2},
\]
the eigenvalues of $I+\tilde{S}_r+\tilde{S}_r^T$ are the eigenvalues
of $D_0^{-1}P_r =P_0^{-1}P_r$.
\rev{To find the bounds on the spectrum of $P_0^{-1}P_r$, recall the inequalities in~\Refx{eq:bound:ar:a0}
and the lower bound for $a_0(\x)$ in~\Refx{eq:assumption:on:a0}, which together imply~that
\[
    (1-\tau_r) a_{0}(\x) \leq a_r(\x,\y) \leq (1 + \tau_r) a_{0}(\x)\quad\text{a.e. in }\Omega\times\bGamma.
\]
Hence, the bilinear forms $\A_0$ and $\A_r$ are equivalent and by Proposition~\ref{prop:equiv} there~holds
\begin{equation} \label{eq:spectrum:0:r}
   \Lambda(P_0^{-1}P_r) \subset \left[ 1 - \tau_r, 1 + \tau_r \right].
\end{equation}
}
%
%
This proves~\Refx{eq:bound:lambda}.

On the other hand, since $\sigma_{\max}$ \abrev{defines} a norm, we use the triangle inequality to~estimate
\begin{equation}
     \sigma_{\max}(\tilde{S}_r) \leq
     \sum_{m=1}^r\sigma_{\max}(L_m\otimes \tilde{K}_m) \leq
     \sum_{m=1}^r\sigma_{\max}(L_m)\sigma_{\max}(\tilde{K}_m).
     \label{eq:bound:aux1}
\end{equation}
Now, $\sigma^2_{\max}(L_m)=\lambda_{\max}(L_mL^T_m)$; since $L_mL_m^T$
is diagonal for \abrev{every} $m$
\abrev{(due to $L_m$ having \rev{at most one nonzero entry per row and per column})},
it follows that for all $m$
\begin{equation}
     \sigma_{\max}(L_m)=\max_{i,j} \abrev{[G_m]}_{ij} \le \max_{k} c_k^m \leq 1
     \label{eq:bound:aux2}
\end{equation}
\abrev{with $c_k^m$ 
being bounded by 1 (cf. Theorem~\ref{thm:G:matrices} and inequalities~\Refx{eq:cmj}).}
Finally, since the eigenvalues of $\tilde{K}_m$ are the eigenvalues of
$K_0^{-1}K_m$, we find~\abrev{that}
\[
   \sigma_{\max}(\tilde{K}_m)=\max_{i}\abs{\lambda_i(K_0^{-1}K_m)}\leq
   \frac{\norm{a_m}{\infty}}{a_0^{\min}}
\]
\abrev{and then inequality~\Refx{eq:bound:sigma} follows from~\Refx{eq:bound:aux1} and~\Refx{eq:bound:aux2}.
This finishes the~proof.}
\end{proof}

We summarize our discussion in the following result.

\begin{proposition} \label{prop:tildebounds}
Suppose that \Refx{eq:assumption:on:a0} and \Refx{eq:assumption:on:ai-1} hold
\rev{and $\tau_r$ is defined in~\Refx{eq:tauess}}.
  Let $P_r$ be defined in \Refx{eq:Pr} and let $\tilde{P}_r$ be its
  symmetric block Gauss--Seidel approximation~\Refx{eq:prt}.
  Then $\tilde{P}_r\sim \abrev{P_r}$ and the spectrum of \abrev{$P_r^{-1} \tilde{P}_r$} satisfies
  \begin{equation}
    \label{eq:tildebounds}
    \abrev{\Lambda(P_r^{-1}\tilde{P}_r) \subset [1,\, 1+\delta_r]},
  \end{equation}
  where
  \begin{equation}
  \delta_r := \rev{\frac{1}{1-\tau_r}} 
                 \bigg( \frac{1}{a_0^{\min}} \sum_{m=1}^r \norm{a_m}{\infty} \bigg)^2.
   \label{eq:deltar}
\end{equation}
\end{proposition}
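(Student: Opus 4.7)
The plan is to essentially assemble the pieces already developed in the preceding discussion of the Rayleigh quotient and in Lemma~\ref{lm:aux:bounds}. The starting point is identity~\Refx{eq:Rayleigh}, which expresses the generalized Rayleigh quotient for $P_r^{-1}\tilde{P}_r$ as
\[
   \frac{\bv^T\tilde{P}_r\bv}{\bv^TP_r\bv}
   = 1 + \frac{\bv^T S_r D_0^{-1} S_r^T \bv}{\bv^T(D_0+S_r+S_r^T)\bv}.
\]
Since $S_r D_0^{-1} S_r^T$ is symmetric positive semidefinite and $P_r = D_0+S_r+S_r^T$ is symmetric positive definite (by Theorem~\ref{thm:truncprec}), the second term is nonnegative. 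This immediately gives the lower spectral bound $\lambda \geq 1$ for every eigenvalue $\lambda$ of $P_r^{-1}\tilde{P}_r$.

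For the upper bound I would perform the change of variable $\w = D_0^{1/2}\bv$ to rewrite the second term as $\rho(\w)$ involving the scaled matrix $\tilde{S}_r$ from~\Refx{eq:Srt}. Then I would invoke the sub-multiplicative bound~\Refx{eq:rho:bound}
\[
   \rho(\w) \leq \frac{\sigma_{\max}^2(\tilde{S}_r)}{\lambda_{\min}(I+\tilde{S}_r+\tilde{S}_r^T)},
\]
which follows from splitting the ratio into a product of two extremal Rayleigh quotients and applying Cauchy--Schwarz-type inequalities with the symmetric factor $I+\tilde{S}_r+\tilde{S}_r^T$ in the denominator.

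The next step is to substitute the two bounds established in Lemma~\ref{lm:aux:bounds}: the denominator is bounded below by $1-\tau_r$, and the squared numerator is bounded above by $\bigl(\tfrac{1}{a_0^{\min}} \sum_{m=1}^r \norm{a_m}{\infty}\bigr)^2$. Combining these yields
\[
   \rho(\w) \leq \frac{1}{1-\tau_r} \bigg(\frac{1}{a_0^{\min}} \sum_{m=1}^r \norm{a_m}{\infty}\bigg)^2 = \delta_r,
\]
so the Rayleigh quotient for $P_r^{-1}\tilde{P}_r$ lies in $[1,\,1+\delta_r]$, from which the spectral inclusion~\Refx{eq:tildebounds} follows.

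There is no real obstacle: the proof is essentially a bookkeeping exercise, since the two nontrivial estimates (the lower bound on $\lambda_{\min}(I+\tilde{S}_r+\tilde{S}_r^T)$ via the equivalence $P_0 \sim P_r$, and the singular value estimate for $\tilde{S}_r$ via a triangle inequality and the three-term recurrence coefficients $c_k^m \leq 1$) have already been carried out in Lemma~\ref{lm:aux:bounds}. The only mild subtlety I would flag is the positivity of $1-\tau_r$, which is guaranteed by $\tau_r \leq \tau < 1$ under assumption~\Refx{eq:assumption:on:ai-1}, so that $\delta_r$ is well-defined and finite.
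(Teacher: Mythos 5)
Your proposal is correct and follows exactly the route the paper takes: the paper's own (very brief) proof likewise substitutes the estimates \Refx{eq:bound:lambda} and \Refx{eq:bound:sigma} from Lemma~\ref{lm:aux:bounds} into \Refx{eq:rho:bound} and then feeds the resulting bound on $\rho(\w)$ back into the Rayleigh quotient identity \Refx{eq:Rayleigh}. Your additional remarks on the nonnegativity of the second term (giving the lower bound $1$) and on the positivity of $1-\tau_r$ are consistent with the paper's setup and require no changes.
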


\abrev{The proof of the upper bound in~\Refx{eq:tildebounds}
is completed by substituting the estimates~\Refx{eq:bound:lambda}, \Refx{eq:bound:sigma} into 
\Refx{eq:rho:bound} and then using the resulting bound for $\rho(\w)$ in~\Refx{eq:Rayleigh}.}

The following \abrev{result} is a straightforward consequence of
\abrev{Theorem~\ref{thm:truncprec}, Proposition~\ref{prop:tildebounds} and the}
transitivity of spectral equivalence.

\begin{theorem}
  Let $\tilde{P}_r$ be the symmetric block Gauss--Seidel approximation \Refx{eq:prt}
  to $P_r$. Let
  $\uptheta_r, \Uptheta_r$ be defined in \Refx{eq:thetas} and let
  $\delta_r$ be defined in \Refx{eq:deltar}. Then $\tilde{P}_r\sim A$ and the spectrum of
  $\tilde{P}_r^{-1}A$ satisfies
  \begin{equation}
    \label{eq:tildebounds1}
    \abrev{\Lambda(\tilde{P}_r^{-1}A) \subset \bigg[\frac{\uptheta_r}{1+\delta_r},\, \Uptheta_r\bigg]}.
  \end{equation}
\end{theorem}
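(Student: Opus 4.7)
The plan is to combine the two spectral equivalence results that are already established---namely Theorem~\ref{thm:truncprec}, which gives $\Lambda(P_r^{-1}A) \subset [\uptheta_r, \Uptheta_r]$, and Proposition~\ref{prop:tildebounds}, which gives $\Lambda(P_r^{-1}\tilde{P}_r) \subset [1, 1+\delta_r]$---using transitivity of spectral equivalence as flagged in the remark after Definition~\ref{def:optimal}. Because all matrices involved are symmetric positive definite, the spectral bounds translate into Loewner order statements on the associated quadratic forms, and the composite bound will then follow from chaining inequalities, with no additional estimation of constants required.

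Concretely, the first step is to rephrase Proposition~\ref{prop:tildebounds} as the two-sided bound
\[
   \bv^T P_r \bv \;\leq\; \bv^T \tilde{P}_r \bv \;\leq\; (1+\delta_r)\, \bv^T P_r \bv
   \quad \forall\, \bv \in \RR^{\Nx\Ny},
\]
which is equivalent to the spectral containment $\Lambda(P_r^{-1}\tilde{P}_r) \subset [1, 1+\delta_r]$ thanks to simultaneous diagonalizability of SPD pairs. The second step is to rewrite Theorem~\ref{thm:truncprec} similarly~as
\[
   \uptheta_r\, \bv^T P_r \bv \;\leq\; \bv^T A \bv \;\leq\; \Uptheta_r\, \bv^T P_r \bv
   \quad \forall\, \bv \in \RR^{\Nx\Ny}.
\]

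For the upper bound on $\Lambda(\tilde{P}_r^{-1}A)$, I would chain the upper estimate in the second display with the lower estimate in the first to obtain
\[
   \bv^T A \bv \;\leq\; \Uptheta_r\, \bv^T P_r \bv \;\leq\; \Uptheta_r\, \bv^T \tilde{P}_r \bv,
\]
yielding the right endpoint $\Uptheta_r$. For the lower bound, I would chain the lower estimate in the second display with the upper estimate in the first:
\[
   \bv^T A \bv \;\geq\; \uptheta_r\, \bv^T P_r \bv \;\geq\; \frac{\uptheta_r}{1+\delta_r}\, \bv^T \tilde{P}_r \bv,
\]
which produces the left endpoint $\uptheta_r/(1+\delta_r)$. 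Together these two displays give the claimed containment~\Refx{eq:tildebounds1}, and the spectral equivalence $\tilde{P}_r \sim A$ follows since both endpoints are strictly positive (which is immediate because $\uptheta_r > 0$ under~\Refx{eq:assumption:on:ai-1} and $\delta_r$ is finite).

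There is essentially no obstacle: the argument is a routine two-step composition, and all the analytic work---in particular, establishing the boundedness of $a_r$ in Lemma~\ref{lem:asbound}, handling the sparsity pattern of $L_m$ in Lemma~\ref{lm:aux:bounds}, and the algebraic manipulation of the Rayleigh quotient leading to $\delta_r$---has already been carried out earlier. The only small point worth noting explicitly is that one must invoke symmetry and positive definiteness of $P_r$ and $\tilde{P}_r$ (both inherited from the bilinear-form construction and from the SBGS factorization~\Refx{eq:prt}) to legitimately pass between the quadratic-form inequalities and the spectral statement.
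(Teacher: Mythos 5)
Your proposal is correct and follows essentially the same route as the paper, which simply notes that the theorem is a straightforward consequence of Theorem~\ref{thm:truncprec}, Proposition~\ref{prop:tildebounds}, and transitivity of spectral equivalence. Your explicit chaining of the quadratic-form inequalities is exactly the computation the paper leaves implicit.
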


\subsection{Implementation. Complexity considerations} \label{sec:implement}

Our proposed solution method for solving the linear system
\Refx{eq:sGFEM} is
the preconditioned Conjugate Gradient (PCG) method, for which the main
computational effort at each step comprises a matrix-vector product with the matrix
$A$ and the solution of a linear system with the preconditioning
matrix.
Since the main computational cost is associated with the
latter operation, we discuss this in detail.
We will denote by $\fl(operation)$ the complexity, i.e., number of flops required to perform \abrev{an} $operation$.
The number of nonzeros of a matrix will be denoted by $\nnz(\cdot)$.
%

As indicated previously, the action of the inverse of $P_r$ needs to
be approximated, due to its sparse (but non-diagonal) block structure.
%
%
We achieve this by replacing $P_r$ with its SBGS approximation $\tilde{P}_r$.
%
Let us consider the implementation of the action of $\tilde{P}_r^{-1}$ onto a given vector~$\bv$;
for general $r \in \NN_0$, this can be achieved as follows
(see~\Refx{eq:Sr:D0}--\Refx{eq:Pr:mod} for the definitions of the
  respective matrices):
\begin{enumerate}
\item solve $(D_0+S_r)\w=\bv$;
\item solve $(D_0+S_r^T)\z=D_0\w$.
\end{enumerate}
Both of the above steps involve the solution of a block-triangular
system, with the main computational cost arising from solving linear
systems with the diagonal blocks $K_0$. Specifically, since $P_r$ has
at most $2r+1$ nonzero block matrices per row, we find 
$$\fl(\tilde{P}_r^{-1}\bv)\approx (2rN_\y)
\nnz(K_0)+2N_\y\fl(K_0^{-1}\b),$$
for some vector $\b$ of size $N_\x$.
%
Below we consider two special cases.

\subsubsection{Special case: $r=0$}
The preconditioner $P_0$ is the mean-based preconditioner introduced in \cite{ghanemkruger96}.
%
The complexity associated with the action of the inverse on a given vector is
\[
   \fl({P}_0^{-1}\bv)=N_\y\fl(K_0^{-1}\b).
\]

\subsubsection{Special case: $r=1$}
The structure of $P_r$ simplifies
greatly when $r=1$.~In particular, $G_1$ has a block-diagonal
structure under a certain permutation~\cite[\S9.5]{lord14}:
\[
  G_1 = \diag( T_{k+1},\,T_k,\ldots,T_k,\ldots,T_1,\ldots,T_1),
\]
where $T_j\in\RR^{j\times j}$ $(j=1,\ldots,k+1)$ are tridiagonal, with zero main diagonal. Note in
particular that $T_1=0$. As a result, $P_1$ will have a block-diagonal
structure, where each diagonal block is a block-tridiagonal matrix of
size $jN_\x$. Specifically,
$$P_1=\bigoplus_{j=1}^{k+1}\bigoplus_{i=1}^{n_j}\left(I\otimes
  K_0+T_j\otimes K_1\right),$$
where, assuming $M>1$,
$$n_j={k+M-j-1 \choose M-2}.$$
Given this structure, the 
complexity for the implementation of the
action of $\tilde{P}_1^{-1}$ on a given vector $\bv$ is
$$\fl(\tilde{P}_1^{-1}\bv)\approx
2\left(\sum_{j=2}^{k+1}jn_j\right)\left(\nnz(K_0)+\fl(K_0^{-1}\b)\right)+n_1 \fl(K_0^{-1}\b).$$
Under the assumption that $\fl(K_0^{-1}\b)$ dominates the computation, we deduce that
the implementation of $\tilde{P}_1$ is at most twice as expensive as the 
implementation of $P_0$.


\subsubsection{Kronecker preconditioner}
We end this section with a
discussion of the complexity required for an implementation of the
Kronecker preconditioner \cite{Ullmann10}. Since
$$P_\otimes=G\otimes K_0=(G\otimes I_{\Nx})(I_{\Ny}\otimes K_0)=(G\otimes
I_{\Nx})P_0,$$
%
the complexity is given by
\[
   \fl({P}_\otimes^{-1}\bv) = N_\y \fl(K_0^{-1}\b) + N_{\x} \fl(G^{-1}\d)
\]
for some vector $\d$ of size $\Ny$. Thus, the complexity exceeds that
of $P_0$ by a computational cost dependent on the sparsity of $G$. In
particular, it was estimated in \cite{Ullmann10} that this additional
cost would amount to $\fl(G^{-1}\d) \sim O((2M+1)^2)$ operations,
excluding the cost of performing a Cholesky factorization of $G$.


\section{Numerical experiments} \label{sec:numerics}
\abrev{
In this section, we investigate the effectiveness of
\rev{the preconditioning strategies considered in~\S\S\ref{sec:preconditioners}--\ref{sec:modified:precond}}.
In particular, we verify the theoretical optimality of truncation preconditioners $P_r$ and $\tilde P_r$ ($r \ge 1$)
with respect to discretization parameters
and compare their performance with that of
the mean-based preconditioner $P_0$ and
the Kronecker product preconditioner $P_{\otimes}$ defined in \Refx{eq:kron}
(see \cite{Ullmann10} for details and analysis).
}

We chose to use test problems
satisfying the descriptions and assumptions in this paper, as well as
problems outside the theoretical framework. Thus, we solved model
  problem~\Refx{eq:strong:form} using the following choices of
  parametric diffusion coefficient:
\begin{itemize}
  \item $a(\x,\y)$  has the affine representation
    \Refx{eq:affine}, with the coefficients $a_m(\x)$ satisfying
    \Refx{eq:assumption:on:a0} and \Refx{eq:assumption:on:ai-1};
  \item $a(\x,\y)$ is a lognormal diffusion coefficient, i.e.,
    $a(\x,\y)=\exp(b(\x,\y))$, where $b(\x,\y)$ is assumed to have the affine representation
    \Refx{eq:affine} but with unbounded parameters $y_m$
    \rev{(we note that this choice of coefficient $a$ is not covered by our theoretical~analysis)}.
\end{itemize}

In all our tests, we chose $\Omega=(0,1)^2$ and $f(\x)=1$. 
We used the MATLAB toolbox
S-IFISS~\cite{SIFISS} to generate SGFEM discretizations of our model
problem for a range of discretization parameters. We used uniform
subdivisions of $\Omega$ into square elements of edge length~$h$, with
$h$ ranging between $2^{-3}$ to $2^{-7}$.
The discretization parameters $k,M$ had ranges $1\leq k\leq 6$, $1\leq M\leq 8$.
%
We solved the resulting linear systems using the preconditioned CG method with
tolerance $tol = 10^{-6}$ and zero initial guess.

\subsection{Test problem 1: affine-parametric diffusion coefficient} \label{sec:numerics:affine}

\abrev{For this test problem, the diffusion coefficient $a(\x,\y)$  had
  the affine-parametric form~\Refx{eq:affine}, with the coefficients
$a_m(\x)$ in the expansion chosen such that they exhibit either slow or fast decay.
As indicated at the end of section~\ref{sec:preconditioners}, this is expected to affect the performance of the
truncation preconditioners $P_r$. In particular, our numerical
experiments will highlight the dependence on the truncation parameter $r$.
}

Following~\cite[Section~11.1]{MR3154028}, we select the expansion coefficients $a_m(\x)$ ($m \in \NN_0$) in~\Refx{eq:affine}
to represent planar Fourier modes of increasing total order,~i.e.,
\begin{equation}
	a_{0}(\x) \,{=}\, 1,\ 
	a_{m}(\x) \,{=}\, \bar\alpha m^{-\tilde\sigma} \cos\big(2\pi\beta_{1}(m)x_{1}\big) \cos\big(2\pi\beta_{2}(m)x_{2}\big),
	\ \x \,{=}\, (x_{1},x_{2}) \,{\in}\, \Omega.
	\label{eq:ex5}
\end{equation}
Here, $\tilde{\sigma}>1$, $0<\bar{\alpha}<1/\zeta(\tilde{\sigma})$, where $\zeta$ denotes the Riemann zeta function,
and $\beta_1,\,\beta_2$ are given by
\begin{align*}
	\beta_{1}(m)=m-k(m)\left(k(m)+1\right)/2,\qquad \beta_{2}(m)=k(m)-\beta_{1}(m)
\end{align*}
with $k(m)=\left\lfloor -1/2+\sqrt{1/4+2m}\right\rfloor$.
Furthermore, we assume the parameters $y_m$ ($m \in \NN$) in~\Refx{eq:affine} to be the images
 of independent uniformly distributed mean-zero random variables on $\Gamma_m = [-1,1]$.
 In this case, $p_m(y_m) = 1/2$, $y_m \in \Gamma_m$, and 
 the orthonormal polynomial basis of $L^2_{\pi_m}(\Gamma_m)$ consists of scaled Legendre polynomials.
Note that with these settings, conditions~\Refx{eq:assumption:on:a0} and~\Refx{eq:assumption:on:ai-1}
are satisfied with constants $a_{0}^{\min}=a_{0}^{\max}=1$ and
$\tau \le \bar{\alpha} \zeta(\tilde{\sigma})$, respectively.

\abrev{The choice $\tilde{\sigma}=2$ in \Refx{eq:ex5} yields coefficients
$a_m$ with slow decay of the amplitudes $\bar\alpha m^{-\tilde\sigma}$, while the fast decay corresponds to the choice $\tilde{\sigma}=4$.
In each case, we select the factor $\bar\alpha$ such that $\bar{\alpha} \zeta(\tilde{\sigma}) = 0.9999$,
which gives $\bar{\alpha} \approx 0.6079$ for $\tilde{\sigma}=2$ and
$\bar{\alpha} \approx 0.9239$ for $\tilde{\sigma}=4$.}
The magnitudes of the expansion coefficients $a_m$ for increasing $m$
for slow and fast decay are displayed in
Table~\ref{tab:magnitude:decay}.

\begin{table}[!b]
	\centering{}%
{\small
	\begin{tabular}{c|ccccccc}
		$m$ & 0 & 1 & 2 & 3 & 4 & 5 & 6\\
          \hline 
          {\Large\strut}
          slow decay ($\tilde{\sigma}=2)$ & {1.0000} & {0.6079} & {0.1520} & {0.0675} & {0.0380} & {0.0243} & {0.0169}\\
          \hline 
          {\Large\strut}
          fast decay ($\tilde{\sigma}=4$) & {1.0000} & {0.9239} & {0.0577} & {0.0114} & {0.0036} & {0.0015} & {0.0007}\\
	\end{tabular}
	\caption{Magnitudes $\left\Vert a_{m}\right\Vert _{\infty}$ of expansion coefficients~\Refx{eq:ex5} for test problem~1.
		}
	\label{tab:magnitude:decay}
}
\end{table}

While the magnitudes of $a_m$ ($m = 1,2,\ldots$)
(and hence, the importance of the corresponding parameters $y_m$, $m=1,2,\ldots$)
decay significantly faster for $\tilde\sigma = 4$
(e.g., $\|a_1\|_{\infty} \approx 16 \|a_2\|_{\infty}$ for $\tilde\sigma = 4$, whereas
$\|a_1\|_{\infty} \approx 4 \|a_2\|_{\infty}$ for $\tilde\sigma = 2$),
we observe that the magnitude of $a_1$ is much closer
to the magnitude of the mean field $a_0$ in the case of fast decay
than in the case of slow decay.
This suggests that for fast decay there holds $a(\x,\y)\approx
a_1(\x,\y)$, which in turn implies that equivalence \Refx{eq:equiv} may
hold for $r=1$ and with a small $\varepsilon_1$. We therefore expect the performance of $P_1$ to
be superior in the fast decay case. This is indeed confirmed by the
iteration counts in Table~\ref{tab:ex5:ideal:precond}. The results
also confirm the optimal performance of $P_r$ with respect to $k$.

A similar behavior can be observed also for the case where the 
preconditioners $P_r$ are replaced by their symmetric block Gauss--Seidel approximations
$\tilde{P}_r$. Table~\ref{tab:ex5:modified:precond} diplays the
corresponding iteration counts for a range of $r$, as well as for the
mean-based preconditioner $P_0$ and Kronecker preconditioner
$P_{\otimes}$, for both fast and slow decay cases.

\begin{table}[!t]
\begin{center}
{\small
	\begin{tabular}{r| *7{c} | *7{c}}
	\multirow{2}{*}{$k$}& \multicolumn{7}{c|}{fast decay} & \multicolumn{7}{c}{slow decay} \\
	\cline{2-15}
	{\large\strut} & $P_{0}$ & $P_{1}$ & $P_{2}$ & $P_{3}$ & $P_{4}$ & $P_{5}$ & $P_{6}$   &
	          $P_{0}$ & $P_{1}$ & $P_{2}$ & $P_{3}$ & $P_{4}$ & $P_{5}$ & $P_{6}$\\
	\hline 
	1 & 13 & 4 & 3 & 3 & 2 & 2 & 2   &   10 & 6 & 4 & 4 & 4 & 3 & 3\\
	2 & 16 & 5 & 4 & 3 & 3 & 2 & 2   &   12 & 7 & 5 & 5 & 4 & 4 & 3\\
	3 & 21 & 6 & 4 & 3 & 3 & 2 & 2   &   14 & 7 & 6 & 5 & 4 & 4 & 4\\
	4 & 24 & 6 & 4 & 3 & 3 & 3 & 2   &   15 & 8 & 6 & 5 & 4 & 4 & 4\\
	\end{tabular}
	\caption{PCG iterations counts for test problem~1; 
	$h=2^{-4}, M=8$.}
	\label{tab:ex5:ideal:precond}
}
\end{center}
\end{table}

\begin{table}[!b]
\setlength\tabcolsep{5.7pt} 
\begin{center}
{\small
	\begin{tabular}{r| *8{c} | *8{c}}
	\multirow{2}{*}{$k$} & \multicolumn{8}{c|}{fast decay} & \multicolumn{8}{c}{slow decay} \\
	\cline{2-17}
	{\Large\strut}& $P_{\otimes}$ & $P_{0}$ & $\tilde P_{1}$ & $\tilde P_{2}$ & $\tilde P_{3}$ & $\tilde P_{4}$ & $\tilde P_{5}$ & $\tilde P_{6}$   &
	                        $P_{\otimes}$ & $P_{0}$ & $\tilde P_{1}$ & $\tilde P_{2}$ & $\tilde P_{3}$ & $\tilde P_{4}$ & $\tilde P_{5}$ & $\tilde P_{6}$\\
	\hline
	1 & 12 & 13 & 7 & 6 & 6 & 6 & 6 & 6		   &   9 & 10 & 6 & 5 & 5 & 5 & 5 & 5\\
		2 & 16 & 16 & 8 & 7 & 7 & 7 & 7 & 7		   & 12 & 12 & 7 & 6 & 6 & 6 & 5 & 5\\
		3 & 20 & 21 & 9 & 9 & 8 & 8 & 8 & 8		   & 14 & 14 & 8 & 7 & 6 & 6 & 6 & 6\\
		4 & 24 & 24 & 10 & 9 & 9 & 9 & 9 & 9		   & 15 & 15 & 9 & 7 & 7 & 6 & 6 & 6\\
		5 & 26 & 27 & 11 & 10 & 10 & 10 & 10 & 10	   & 16 & 16 & 9 & 7 & 7 & 7 & 6 & 6\\
		6 & 29 & 29 & 12 & 11 & 11 & 11 & 11 & 11	   & 17 & 17 & 10 & 8 & 7 & 7 & 7 & 7\\
	\end{tabular}
        \caption{PCG iterations counts for test problem~1; 
          $h=2^{-4}, M=8$.
        }
	\label{tab:ex5:modified:precond}
        }
\end{center}
\end{table}

\abrev{The results in Tables~\ref{tab:ex5:ideal:precond}
  and~\ref{tab:ex5:modified:precond} indicate that
the iteration counts corresponding to the approximations $\tilde{P}_r$
of the preconditioners $P_r$ are higher. This is expected given the
theoretical deterioration of the spectral bounds in
\Refx{eq:tildebounds1} as compared to the bounds in \Refx{eq:bounds}}.
However, all truncation preconditioners require fewer iterations than their mean-based and Kronecker product counterparts.
This improvement in the iteration counts is more pronounced
in the case of fast decay than in the case of slow decay, which is consistent, in particular,
with how the magnitudes of expansion coefficients $a_m$ change with $m$
(see Table~\ref{tab:magnitude:decay} and the associated discussion above).
For example, the numbers of iterations for the truncation preconditioner $P_1$ (resp., $\tilde P_1$)
are less than those for the mean-based preconditioner $P_0$ by factors of 3 to 4
(resp., by factors of about 2 to 2.5) in the case of fast decay.
This is because in this case, the expansion coefficient $a_1$ has approximately the same magnitude as the mean field.
In the case of slow decay, however, both $P_1$ and $\tilde P_1$ outperform $P_0$
in terms of the number of iterations only by a factor between 1.5 and 2.
%
It is worth recalling here that the computational cost for the truncation preconditioner $\tilde P_1$
is about twice the cost of the mean-based preconditioner $P_0$ (see~\S\ref{sec:implement});
thus, in terms of the overall computational complexity, $\tilde P_1$ performs at least the same as $P_0$;
in the fast decay case, the overall computational cost for modified truncation preconditioners
is significantly lower than that for $P_0$.

If more expansion coefficients are retained in $P_r$ ($r \ge 2$),
then the iteration counts naturally (and consistently) decrease;
in particular, they decrease quicker in the case of fast decay of coefficient amplitudes
than in the case of slow decay of the amplitudes; see Table~\ref{tab:ex5:ideal:precond}.
This is again in agreement with what one might expect and reflects different decay patterns
of $\|a_m\|_\infty$ in each of these cases, as shown in Table~\ref{tab:magnitude:decay}.
However, when applying the corresponding modified truncation preconditioners $\tilde P_r$ ($r \ge 2$)
and increasing the number $r$ of retained expansion coefficients, the iteration counts decrease very slowly
(in the case of fast decay they even stagnate for $r \ge 2$ in most cases);
see Table~\ref{tab:ex5:modified:precond}. \abrev{This indicates that
  no significant improvement is obtained by including additional terms
  $a_m$ in the definition of $P_r$.}

The above set of experiments demonstrates that the modified truncation preconditioners $\tilde P_r$
provide sufficiently accurate approximations of stochastic Galerkin matrices and thus
can be used as effective practical preconditioners for linear systems arising from SGFEM approximations
of the model problem~\Refx{eq:strong:form} with \emph{affine-parametric} representation of the diffusion coefficient.
Depending on the decay of magnitudes of the expansion coefficients,
one may choose larger values of $r$ to improve the efficiency of the
solver (e.g., in the case of slow decay). However, in most cases, we recommend to choose $r=1$ or $r=2$.

We end the discussion of our first test problem with a numerical
confirmation of optimality of the modified truncation preconditioners 
with respect to discretization parameters $h$ and $M$.
We consider again the cases of fast ($\tilde{\sigma}=4$) and slow ($\tilde{\sigma}=2$)
decay of coefficient amplitudes $\bar\alpha m^{-\tilde\sigma}$ in~\Refx{eq:ex5} and
employ three preconditioners:
the mean-based $P_0$ and the modified truncation preconditioners
$\tilde P_1$ and $\tilde P_2$.
\abrev{Note that other modified
truncation preconditioners (for $r>2$) yield similar performance and
the corresponding results are not included here. We chose to work with two values of $M \in
\{4,\,8\}$ and several uniform subdivisions into squares of side
lengths ranging from $h=2^{-3}$ to $h=2^{-7}$, 
while keeping fixed the polynomial degree $k = 3$.}
%
\abrev{The results of these computations are presented in Table~\ref{tab:ex5:hM-optimality}.
These results show that indeed,
the iteration counts do not change as $M$ increases from 4 to 8
(for the same value of $h$) and
as the spatial mesh gets sufficiently refined (for the same value of~$M$).
}

\begin{table}[!t]
\begin{center}
{\small
	\begin{tabular}{c | *3{c} | *3{c} || *3{c} | *3{c}}
				  & \multicolumn{6}{c||}{fast decay} & \multicolumn{6}{c}{slow decay} \\
	\cline{2-13}
	\multirow{2}{*}{$h$} & \multicolumn{3}{c|}{{\large\strut} $M=4$} & \multicolumn{3}{c||}{$M=8$}
	                                & \multicolumn{3}{c|}{$M=4$} & \multicolumn{3}{c}{$M=8$} \\
	\cline{2-13}
	{\Large\strut} & $P_{0}$ & $\tilde P_{1}$ & $\tilde P_{2}$ & $P_{0}$ & $\tilde P_{1}$ & $\tilde P_{2}$
			  & $P_{0}$ & $\tilde P_{1}$ & $\tilde P_{2}$ & $P_{0}$ & $\tilde P_{1}$ & $\tilde P_{2}$\\
	\hline
		{\large \strut}\!\!
		$2^{-3}$ & 18 & 8 & 8		   &   18 & 8 & 8 	& 13 & 7 & 6		   &   13 & 7 & 6 \\
		$2^{-4}$ & 21 & 9 & 9   	   & 21 & 9 & 9	& 14 & 8 & 7   	   & 14 & 8 & 7\\
		$2^{-5}$ & 23 & 10 & 9 	    & 23 & 10 & 9	& 14 & 8 & 7 	   & 15 & 8 & 7\\
		$2^{-6}$ & 24 & 10 & 10	   & 24 & 10 & 10	& 15 & 8 & 7	 	   & 15 & 8 & 7\\
		$2^{-7}$ & 24 & 10 & 10	   & 24 & 10 & 10	& 15 & 8 & 7	 	   & 15 & 8 & 7\\
	\end{tabular}
	\caption{PCG iterations counts for test problem~1 
	for various $h$ and $M$, for $k=3$.}
	\label{tab:ex5:hM-optimality}
}
\end{center}
\end{table}

\subsection{Test problem 2: non-affine parametric diffusion coefficient} \label{sec:numerics:non-affine}

Consider again model problem \Refx{eq:strong:form}, now
with the following truncated \emph{lognormal} diffusion coefficient
\begin{equation}
  a(\x,\y) := \exp\!\big(b(\x,\y)\big) = \exp\!\bigg(b_0(\x) + \sum_{m=1}^N
    b_m(\x)y_m\bigg),\ \ \x \in \Omega,\ \ \y \in \bGamma := \prod_{m=1}^N\Gamma_m,
    \label{eq:ex5log}
\end{equation}
where $b_0,\,b_m \in L^{\infty}(\Omega)$ for all $m=1,\ldots,N$ and 
the parameters $y_m \in \Gamma_m := \RR$
are the images of independent normally distributed random variables with zero mean and unit variance.
Accordingly, $p_m$ now denotes the standard Gaussian probability density function,
and the joint probability density function is $p(\y)=\prod_{m=1}^N p_m(y_m)$.
The well-posedness of weak formulation~\Refx{eq:weak:form} in this case has been studied in~\cite{Charrier_12_SWE}.

As a polynomial basis of $L^2_\pi(\bGamma)$ we choose the set of scaled Hermite polynomials
$\{\psi_{\balpha}:\bGamma\rightarrow\RR : \balpha\in\NN_0^N\}$
orthonormal with respect to the inner product $\langle\cdot,\cdot\rangle_{\pi}$.
In this basis, the diffusion coefficient~\Refx{eq:ex5log} has the representation
\[
  a(\x,\y):=\sum_{{\balpha \in \NN_0^N}} a_{\balpha}(\x) \psi_{\balpha}(\y)
\]
with (cf.~\cite[p.~926]{Ullmann10})
\begin{equation}
   a_\balpha(\x) = \langle a(\x,\cdot), \psi_{\balpha}\rangle_{\pi} = 
   \mathbb{E}[a(\x,\cdot)] \prod_{m\in\spp\balpha}\frac{b_{m}^{\alpha_{m}}(\boldsymbol{x})}{\sqrt{\alpha_{m}}}
   \quad
   \forall\,\balpha \in \NN_0^N,
   \label{eq:t-alpha}
\end{equation}
where
\[
   \mathbb{E}[a(\x,\cdot)] =
   \int_{\bGamma} \exp\sep{b(\x,\y)} p(\y) \dd\y =
   \exp\bigg(b_{0}(\x)+\frac{1}{2}\sum_{m=1}^N b_{m}^{2}(\x)\bigg) > 0\ \ \text{a.e. in $\Omega$}.
\]

Let $M<N$.
A Galerkin projection onto $X_h\otimes S_{k}^M$ (cf. \Refx{eq:skm}--\Refx{eq:VhkM}) yields a linear system with coefficient~matrix
\begin{equation}
   A=\sum_{\balpha\in \I_{2k}^{M}} G_{\balpha} \otimes K_{\balpha},
   \label{eq:decomp:A:lognormal}
\end{equation}
where for all $\balpha \in \I_{2k}^{M}$ we have defined
\begin{equation*}
\begin{aligned}
	\left[G_{\balpha}\right]_{tj} & :=
	\left\langle \psi_{\balpha} \psi_{\bkappa(j)}, \psi_{\bkappa(t)}\right\rangle _{\pi}
	\qquad\ \,\text{for } t,j=1,\ldots,N_\y,\\[4pt]
	\left[K_{\balpha}\right]_{si} & :=
	\int_{\Omega} a_{\balpha} \nabla\phi_{i} \cdot \nabla\phi_{s}\, \dd\x
	\qquad \text{for } s,i=1,\ldots,N_\x
\end{aligned}
\end{equation*}
with $a_\balpha$ given by~\Refx{eq:t-alpha}. 
Note that all matrices $G_\balpha$ have nonnegative entries; cf.~\cite[Appendix~A]{ErnstU_10_SGM}.
In particular, $G_\zr = I$, where $\zr = (0,0,\ldots,0) \in \I_{2k}^{M}$.
Furthermore, the well-posedness of weak formulation implies that the matrix $A$ is positive definite.

In order to define a family of truncation preconditioners for $A$, we introduce an
ordering of the terms in \Refx{eq:decomp:A:lognormal} based on the magnitude of $a_\balpha$.
To that end, we assume, without loss of generality, that all magnitudes
$\norm{a_\balpha}{\infty}$ for $\balpha \in \I_{2k}^M$ are distinct.
Then, the ordering is defined by the sequence
$\seq{\balpha_\ell\, :\, \ell = 0,1,\ldots,\card{\I_{2k}^M}-1}$
of all multi-indices in $\I_{2k}^M$ such that
\[
  \norm{a_{\balpha_i}}{\infty}>\norm{a_{\balpha_j}}{\infty},\quad i < j.
\]
%
%
Unlike in the affine case, this ordering is not sufficient to ensure
positivity of the truncated diffusion coefficient
\begin{equation}
  a_r(\x,\y) := 
  \sum_{\ell=0}^r a_{\balpha_\ell}(\x)\psi_{\balpha_\ell}(\y),\quad 0 \le r \le \card{\I_{2k}^M} - 1.
  \label{eq:lognormal:truncation}
\end{equation}
Consequently, the resulting truncation preconditioner
\begin{equation}
   P_r := 
   \sum_{\ell=0}^{r}G_{\balpha_\ell}\otimes K_{\balpha_\ell}
  \label{eq:lognormal:Pr}
\end{equation}
is not guaranteed to be positive definite.
%
%
However, as in the affine case, we replace $P_{r}$ 
by its symmetric block Gauss--Seidel approximation $\tilde{P}_{r}$. 
As demonstrated in Proposition~\ref{prop:lognormal:SBGS} below, the modified truncation preconditioner $\tilde{P}_{r}$ is positive definite,
provided that the mean field $a_\zr(\x) = \mathbb{E}[a(\x,\cdot)]$ is included in the
truncation~\Refx{eq:lognormal:truncation}.
It is important to note that the complexity associated with the action of $\tilde{P}_{r}^{-1}$
remains unchanged from the affine case; cf.~\S\ref{sec:implement}.

\begin{proposition} \label{prop:lognormal:SBGS}
Let $a$ be the lognormal diffusion coefficient given by~\Refx{eq:ex5log}.
Let $0 \le r \le \card{\I_{2k}^M} - 1$ and assume that the truncated diffusion coefficient $a_r$ in~\Refx{eq:lognormal:truncation}
satisfies $\mathbb{E}\left[a_r\right] = \mathbb{E}\left[a\right] = a_\zr > 0$ a.e. in $\Omega$.
Then the truncation preconditioner $P_r$ given by~\Refx{eq:lognormal:Pr} can be represented as
\begin{equation}
   P_r = D + L + L^T,
   \label{eq:lognormal:Pr:decomp}
\end{equation}
where $D$ is a block-diagonal symmetric positive definite matrix and
$L$ is a strictly lower block-triangular matrix.
As a consequence, the SBGS approximation of $P_r$ defined~by
\[
   \tilde P_r := (D + L)\, D^{-1} (D + L^T)
\]
is a symmetric positive definite matrix.
\end{proposition}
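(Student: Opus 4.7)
The plan is to first establish the additive decomposition $P_r = D + L + L^T$ directly from the Kronecker structure of~\Refx{eq:lognormal:Pr}, and then concentrate the main effort on showing that the block-diagonal matrix $D$ is symmetric positive definite (SPD); the SPD property of $\tilde P_r$ will then follow by a short congruence argument.

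First, since each $G_{\balpha}$ and each $K_{\balpha}$ is symmetric, $P_r$ is symmetric. I would split every $G_{\balpha_\ell}$ as $\Delta_\ell + L_\ell + L_\ell^T$, where $\Delta_\ell$ is its diagonal part and $L_\ell$ its strict lower-triangular part, and then collect the corresponding Kronecker products to obtain $P_r = D + L + L^T$ with
\[
   D := \sum_{\ell=0}^{r} \Delta_\ell \otimes K_{\balpha_\ell},\qquad
   L := \sum_{\ell=0}^{r} L_\ell \otimes K_{\balpha_\ell}.
\]
This gives exactly the structural decomposition asserted in~\Refx{eq:lognormal:Pr:decomp}: $D$ is block-diagonal and symmetric while $L$ is strictly block lower-triangular.

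The main obstacle is proving that $D$ is SPD. Fix a block index $t \in \{1,\ldots,\Ny\}$ and a vector $\bv = (v_i) \in \RR^{\Nx}$ with associated finite element function $v_h(\x) := \sum_i v_i \phi_i(\x)$. Using the identity $\bv^T K_{\balpha_\ell} \bv = \int_\Omega a_{\balpha_\ell}(\x)\, \abs{\nabla v_h(\x)}^2\, \dd\x$, I would rewrite
\[
   \bv^T [D]_{tt}\, \bv = \int_\Omega c_t(\x)\, \abs{\nabla v_h(\x)}^2\, \dd\x,\qquad
   c_t(\x) := \sum_{\ell=0}^{r} [G_{\balpha_\ell}]_{tt}\, a_{\balpha_\ell}(\x).
\]
The crux is then to show that $c_t(\x) > 0$ a.e.\ in $\Omega$, which I would establish via two structural features of the Hermite setting. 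By evenness of the Gaussian density and the parity of the univariate (normalized) Hermite polynomials, $[G_\balpha]_{tt}$ vanishes whenever any component $\alpha_m$ is odd; for multi-indices $\balpha$ with all components even, the nonnegativity of Hermite triple products yields $[G_\balpha]_{tt} \ge 0$. On the other hand, the closed form~\Refx{eq:t-alpha} shows that for any such even $\balpha$
\[
   a_\balpha(\x) = a_\zr(\x) \prod_{m\in\spp\balpha} \frac{b_m^{\alpha_m}(\x)}{\sqrt{\alpha_m!}} \ge 0 \qquad\text{a.e.\ in }\Omega,
\]
because $a_\zr > 0$ and every even power $b_m^{\alpha_m}$ is nonnegative. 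The hypothesis $\mathbb{E}[a_r] = a_\zr$ guarantees that $\balpha_0 = \zr$ appears in the truncation list, contributing the strictly positive term $1 \cdot a_\zr(\x)$ to $c_t$, while every other term is nonnegative; hence $c_t(\x) \ge a_\zr(\x) > 0$ a.e. Since any $v_h \in X_h \subset H_0^1(\Omega)$ with $\bv \neq \zr$ satisfies $\abs{\nabla v_h}^2 > 0$ on a set of positive measure, this forces $\bv^T [D]_{tt}\, \bv > 0$, so every diagonal block, and therefore $D$ itself, is SPD.

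Finally, with $D$ SPD the factor $D + L$ is block lower-triangular with SPD diagonal blocks, hence invertible, and the representation
\[
   \tilde P_r = (D+L)\, D^{-1}\, (D+L)^T
\]
exhibits $\tilde P_r$ as a congruence of the SPD matrix $D^{-1}$ by the invertible matrix $(D+L)^T$; consequently $\tilde P_r$ is SPD. I expect the parity and nonnegativity argument for $c_t$ to be the most delicate step, because it is precisely where the Hermite basis and the affine form of the exponent $b$ in~\Refx{eq:ex5log} enter in an essential way; the remaining steps are largely structural and a direct Kronecker-product computation.
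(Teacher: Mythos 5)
Your proposal is correct and follows essentially the same route as the paper's proof: both rest on the parity argument showing $[G_{\balpha}]_{tt}=0$ unless all entries of $\balpha$ are even, the nonnegativity of $a_{\balpha}$ and of the entries of $G_{\balpha}$ for even $\balpha$, the inclusion of the mean term $a_{\zr}>0$ to get strict positive definiteness of the diagonal blocks, and a congruence argument for $\tilde P_r$. Your repackaging of the diagonal-block contributions into the single coefficient function $c_t(\x)$ is only a cosmetic variation on the paper's term-by-term treatment.
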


\begin{proof}
Denote by $\I_{\rm \,even}$ the set of multi-indices in $\I_{2k}^M$ with even entries, i.e.,
\[
   \I_{\rm \,even} := 
   \left\{
   \balpha = (\alpha_1,\ldots,\alpha_M) \in \I_{2k}^M;\; \alpha_m\ \text{is even for all}\ m = 1,\ldots,M
   \right\}.
\]
Let $\balpha \in \I_{\rm \,even}$.
It follows from~\Refx{eq:t-alpha} that $a_\balpha(\x) \ge 0$ a.e. in $\Omega$.
Therefore, the stiffness matrices $K_\balpha$ for $\balpha \in \I_{\rm \,even}$ are symmetric positive semi-definite;
in particular, the matrix $K_\zr$ is symmetric positive definite.
Since all diagonal elements of the matrix $G_\balpha$ are nonnegative, 
each nonzero diagonal block of $G_\balpha \otimes K_\balpha$ is a symmetric positive semi-definite matrix.
In particular, the block-diagonal matrix $G_\zr \otimes K_\zr = I \otimes K_\zr$ is symmetric positive definite.

Now let $\balpha \in \I_{2k}^M \setminus \I_{\rm \,even}$. 
In this case, all diagonal elements of the matrix $G_\balpha$ are zeros.
Indeed, for any $j = 1,\ldots,N_\y$, one has
\[
   \left[G_{\balpha}\right]_{jj} =
   \big\langle \psi_{\balpha} \psi_{\bkappa(j)}, \psi_{\bkappa(j)}\big\rangle _{\pi} =
   \big\langle \psi_{\balpha}, \psi^2_{\bkappa(j)}\big\rangle _{\pi} = 0,
\]
because there exists $m^* \in \{1,\ldots,M\}$ such that $\alpha_{m^*}$ is odd and the associated univariate Hermite polynomial
is an odd function.
Therefore, in this case, all diagonal blocks of $G_\balpha \otimes K_\balpha$ are zero matrices.

Overall, by combining the above observations 
and using the assumption that $\mathbb{E}\left[a_r\right] = \mathbb{E}\left[a\right]$,
we conclude that the diagonal blocks of the truncation preconditioner $P_r$ in~\Refx{eq:lognormal:Pr}
are symmetric positive definite matrices. This proves~\Refx{eq:lognormal:Pr:decomp}.

It is now easy to see that the SBGS approximation of $P_r$ is a positive definite matrix.
Indeed, for any nonzero vector $\bv$ there holds
\[
   \bv^T \tilde P_r \bv = \bv^T (D + L)\, D^{-1} (D + L^T) \bv = \w^T D^{-1} \w > 0
\]
with nonzero $\w := (D + L^T) \bv$.
\end{proof}

\begin{table}[!t]
	\centering{}%
	\begin{tabular}{c|c|c}
		$\ell$ & $\balpha_\ell$ & $\norm{a_{\balpha_\ell}}{\infty}$\tabularnewline
		\hline 
		0 & (0,0,0,0,0,0) & 3.20\tabularnewline
		1 & (1,0,0,0,0,0) & 1.75\tabularnewline
		2 & (2,0,0,0,0,0) & 0.68\tabularnewline
		3 & (0,1,0,0,0,0) & 0.44\tabularnewline
		4 & (1,1,0,0,0,0) & 0.24\tabularnewline
		5 & (3,0,0,0,0,0) & 0.21\tabularnewline
		6 & (0,0,1,0,0,0) & 0.19\tabularnewline
		7 & (0,0,0,1,0,0) & 0.11
	\end{tabular}
	\caption{Multi-indices of first 8 largest magnitudes $\norm{a_{\balpha}}{\infty}$ for test problem 2; $M = k = 6$.}
	\label{tab:magnitude:non-affine}
\end{table}

In numerical experiments, we set $N = 20$ and chose $b_m(\x)$ in~\Refx{eq:ex5log}
to be the coefficients $a_m(\x)$ in test problem~1 as defined in~\Refx{eq:ex5}
with $\tilde{\sigma}=2$ and $\bar{\alpha}=0.547$.
In Table~\ref{tab:magnitude:non-affine}, for $M = k = 6$, we show first eight multi-indices in the sequence
$\seq{\balpha_\ell}$
and the corresponding coefficient magnitudes $\norm{a_{\balpha_\ell}}{\infty}$.
We see that in this example, the coefficient with the largest magnitude is the mean field, i.e., $a_{\balpha_0} = a_\zr$.
%
While the distribution of indices inducing the ordering does not display any obvious pattern, 
we note a fast decay with $\ell$ in the magnitudes recorded, which is similar to the affine~case.

Table~\ref{tab:numeric:non-affine} displays the PCG iteration counts corresponding to solving linear systems arising from SGFEM
discretizations of the described test problem.
We used the following discretization parameters: $h \,{=}\, 2^{-4}$, $M \,{=}\, 6$, and $k \,{\in}\, \{1,\ldots,6\}$.
In our experiments, we employed the modified truncation preconditioners $\tilde P_r$ with $r \in \{1,\ldots,6\}$,
alongside the mean-based ($P_0$) and Kronecker ($P_\otimes$) preconditioners.
These experiments included cases where preconditioners $P_{r}$ defined by~\Refx{eq:lognormal:Pr} were not positive definite
(in Table~\ref{tab:numeric:non-affine}, 
the iteration counts for such cases are shown in~boldface).
%
%

\begin{table}[!b]
	\centering{}%
	\begin{tabular}{r|cccccccc}
		 $k$ & $P_{\otimes}$ & $P_{0}$ & $\tilde{P}_{1}$ & $\tilde{P}_{2}$ & $\tilde{P}_{3}$ & $\tilde{P}_{4}$ & $\tilde{P}_{5}$ & $\tilde{P}_{6}$\\
		\hline 
		1 & 12 & 12 & 6 & 7 & 6 & 6 & 6 & 6\\[-1pt]
		2 & 18 & 19 & 8 & 10 & 9 & 9 & 8 & 8\\[-1pt]
		3 & 25 & 26 & \bf{10} & 12 & 11 & 11 & 10 & 10\\[-1pt]
		4 & 32 & 34 & \bf{13} & 15 & 13 & 13 & 12 & 11\\[-1pt]
		5 & 40 & 43 & \bf{17} & 19 & 16 & 17 & \bf{13} & \bf{12}\\[-1pt]
		6 & 49 & 52 & \bf{24} & 22 & 19 & 20 & \bf{14} & \bf{14}
	\end{tabular}
	\caption{PCG iterations counts 
	    for test problem 2; $h=2^{-4}$, $M=6$.}
	\label{tab:numeric:non-affine}
\end{table}

The results in Table \ref{tab:numeric:non-affine} indicate that the numbers of iterations by the modified truncation preconditioners
are significantly lower than those corresponding to the mean-based and Kronecker preconditioners
(it is worth noting here that while the computational cost for $P_0$ and $\tilde P_r$ remains unchanged from the affine case,
the cost for $P_{\otimes}$ 
in this test problem will be significantly higher than in the affine case,
due to the density of the matrix $G$ in~\Refx{eq:kron} for the lognormal diffusion coefficient).
For all preconditioners, the experiments show that the iteration counts grow with $k$,
although this growth is much less pronounced for 
truncation preconditioners.
Furthermore, while we see only a negligible improvement with increasing $r$ for $k = 1,\ldots,4$,
this becomes more pronounced for higher polynomial degrees ($k = 5,6$).

\section{Summary and future work} \label{sec:conclusions}

Efficient solution of large coupled linear systems is a key ingredient in successful implementation
of the stochastic Galerkin finite element method.
\rev{Truncation preconditioners 
represent} a competitive alternative to
existing solvers relying on the mean-based and Kronecker preconditioners.
Our theoretical analysis shows that for elliptic problems with \emph{affine-parametric} coefficients,
truncation preconditioners are optimal with respect to discretization parameters.
Our numerical experiments confirm this, while also demonstrating the improvement in the iteration count
when compared with the mean-based and Kronecker preconditioners.

On a practical note, the superior efficiency of \rev{considered} solvers requires,
crucially, suitable fast (possibly parallel) implementation of the corresponding symmetric block Gauss--Seidel approximations,
which were also analyzed and shown to be optimal.
For simplicity, we considered a model diffusion problem, however, the analysis included in this work can be
extended in a straightforward manner to the general case of elliptic PDE with parametric or uncertain inputs, under standard assumptions.

We have also 
\rev{applied truncation preconditioners in} 
the case of \emph{non-affine} (specifically, lognormal) diffusion coefficient.
The numerical experiments suggest this is a promising approach.
Theoretical analysis of truncation preconditioners for this class of parametric problems will be the focus of future research on the~topic.

\bibliographystyle{siam}
\bibliography{ref}

\end{document}